\documentclass[11pt]{article}
\usepackage{graphicx} 
\usepackage{pslatex}
\usepackage{time}
\usepackage{booktabs}
\usepackage{graphicx}
\graphicspath{ {figs/} }
\usepackage[T1]{fontenc}
\usepackage{amssymb,amsmath,amsfonts}
\usepackage{palatino}
\usepackage{color}
\usepackage{fullpage}
\usepackage{hyperref}
\usepackage{comment}
\usepackage{amsthm}
\usepackage{enumerate}
\usepackage{bigints}
\usepackage{systeme,mathtools}
\usepackage{mdframed}
\usepackage{pdflscape}
\usepackage{rotating}
\usepackage{makecell}
\usepackage{matlab-prettifier}
\usepackage{listings}
\usepackage{setspace}
\DeclareMathAlphabet{\mathpzc}{OT1}{pzc}{m}{it}

\newcommand{\bit}{\begin{itemize}}
\newcommand{\eit}{\end{itemize}}

\newenvironment{dsub}[2]{  \begin{array}{ccccccccccccccc}{#1} \\ {#2}}{\end{array} }
\newcommand\bml{\begin{dsub}}
\newcommand\eml{\end{dsub}}

\newenvironment{mat}{\left(\begin{array}{ccccccccccccccc}}{\end{array}\right)}
\newcommand\bcm{\begin{mat}}
\newcommand\ecm{\end{mat}}

\newenvironment{rmat}{\left(\begin{array}{rrrrrrrrrrrrr}}{\end{array}\right)}
\newcommand\brm{\begin{rmat}}
\newcommand\erm{\end{rmat}}

\usepackage{mathtools}
\usepackage{stmaryrd}
\newtheorem{theorem}{Theorem}
\newtheorem{corollary}{Corollary}

\newtheorem{lemma}{Lemma}

\newcommand{\N}{{\mathbb N}}

\newcommand{\R}{{\mathbb R}}

\definecolor{mve}{rgb}{0.7,0.35,0.15}
\definecolor{brght}{rgb}{0.825,0.2625,0.15}
\definecolor{yello}{rgb}{1,0.925,0.65}
\definecolor{bluu}{rgb}{0.65, 0.95, 1}
\definecolor{bluu2}{rgb}{0.2, 0.5, 0.8}

\date{\vspace{-5ex}}

\usepackage{geometry}
\usepackage{float}
\usepackage{array}
\usepackage{pgf}
\usepackage{pgfpages}

\title{A Sharp Kato-Rosenblum Type Theorem for Unbounded n-Tuples}
\author{Rishabh Bhutani and Dan Virgil Voiculescu}

\begin{document}

\maketitle
\begin{abstract}
    We prove a generalization for commuting n-tuples of unbounded self-adjoint operators and the Lorentz $(n,1)$ ideal, $n \ge 3$, of the Kato-Rosenblum theorem. The result is derived from earlier work for bounded operators $\cite{voiculescu1981some}$. Also, a very weak result for $n=2$ unbounded operators and other additional results are obtained. 
\end{abstract}
\section{Introduction}
Let $\mathcal{H}$ be a complex separable infinite-dimensional Hilbert space. By $\mathcal{C}_n^{-}$ we shall denote the Lorentz $(n,1)$-ideal with norm 
\begin{equation*}
    |K|_{n}^{-} =  \sum_{j} s_j \cdot j^{-1+\frac{1}{n}}. \tag*{(\cite{gohberg1978introduction}, \cite{simon2005trace})}
\end{equation*}
The main aim of this note is to prove the following result:
\begin{theorem} \label{thm1}
    Let $(X_1, \ldots, X_n)$ and $(Y_1, \ldots, Y_n)$ be two n-tuples of possibly unbounded commuting self-adjoint operators such that $X_j - Y_j \in \mathcal{C}_n^-$ for all $1 \leq j \leq n$ and let $E_{ac}(X_1, \ldots, X_n)$ be the projection onto the Lebesgue absolutely continuous subspace of $(X_1, \ldots, X_n)$. Assuming $n \ge 3$, the following strong limit exists: 
    \begin{equation*}
        W = s-\lim_{\xi \in \R^n, |\xi| \to \infty} \exp\left(-i \sum_{1 \leq j \leq n} \xi_j Y_j \right) \exp\left(i \sum_{1 \leq j \leq n} \xi_j X_j \right) E_{ac}(X_1, \ldots, X_n)
    \end{equation*}
    where $\xi=(\xi_1, \ldots, \xi_n)$ and the limit is to the point at $\infty$ in the Alexandroff compactification of $\R^n$. 
    
\end{theorem}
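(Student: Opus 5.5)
The plan is to reduce to the bounded case treated in \cite{voiculescu1981some}. There it is shown that for commuting $n$-tuples of bounded self-adjoint operators $A=(A_1,\ldots,A_n)$, $B=(B_1,\ldots,B_n)$, $n\ge 3$, with $A_j-B_j\in\mathcal{C}_n^-$, the generalized wave operator $s\text{-}\lim_{|\xi|\to\infty} e^{-i\xi\cdot B}e^{i\xi\cdot A}E_{ac}(A)$ exists. The key tool behind that result is the quasicentral approximation of the tuple modulo $\mathcal{C}_n^-$: the obstruction $k_n^-(A)$ vanishes, and there are finite-rank $0\le P_m\uparrow I$ with $|[P_m,A_j]|_n^-\to 0$.

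\textbf{Step 1: bounded transforms.} Put $A_j=\arctan X_j$ and $B_j=\arctan Y_j$ (or $h(X_j)$ with $h$ bounded, smooth, strictly increasing). These are commuting bounded self-adjoint tuples. The map $\Phi=(h,\ldots,h)$ is a diffeomorphism of $\R^n$ onto an open cube, so it carries Lebesgue null sets to Lebesgue null sets in both directions. Hence $E_{ac}(A)=E_{ac}(X)$, and likewise for $Y$ and $B$.

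\textbf{Step 2: preserving the ideal.} I need $h(X_j)-h(Y_j)\in\mathcal{C}_n^-$. Choose $h$ so that this is automatic. For instance, use the resolvent form $h(x)=\int \mathrm{Im}\,(x-i t)^{-1}\,d\mu(t)$. Then
\[(X_j-it)^{-1}-(Y_j-it)^{-1}=(X_j-it)^{-1}(Y_j-X_j)(Y_j-it)^{-1}\in\mathcal{C}_n^-,\]
with norm at most $t^{-2}|X_j-Y_j|_n^-$. Integrating against a suitable finite measure $\mu$ gives convergence in the ideal norm. One point needs care: if $X_j-Y_j$ is bounded but the domains differ, the identity holds on the common domain, which is all we need here.

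\textbf{Step 3: pulling back the limit.} This is the main obstacle. The bounded result controls $e^{i\eta\cdot A}$, not $e^{i\xi\cdot X}=e^{i\xi\cdot h^{-1}(A)}$, and the exponent $\xi\cdot h^{-1}(a)$ is not linear in $a$. So I would not apply the bounded theorem as a black box. Instead I would rerun its proof, whose mechanism I expect to be as follows.

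First, using the quasicentral units $P_m$ for $A$ (and, after the transform, for $X$), decompose $X_j-Y_j=K_j$.

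Second, establish a Kato-smoothness or trace-class-type estimate: for $f$ in a dense set of vectors in $E_{ac}(X)\mathcal{H}$ with compactly supported, smooth spectral density, show that
\[\int_{\R^n}\|\,\cdot\,(1-P_m)e^{i\xi\cdot X}f\|\,d\xi\]
(appropriately radially parametrized) is finite. I would prove this via a Cook-type argument along rays. Along a ray $\xi=r\omega$,
\[\frac{d}{dr}\,e^{-ir\omega\cdot Y}e^{ir\omega\cdot X}f=i\,e^{-ir\omega\cdot Y}(\omega\cdot K)e^{ir\omega\cdot X}f.\]
The finite-rank part $P_mK$ contributes $\|P_m e^{ir\omega\cdot X}f\|$, which decays like $r^{-N}$ uniformly in $\omega$ by stationary phase, since the spectral measure of $f$ has a smooth density. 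The remainder $(1-P_m)K$ is small in $\mathcal{C}_n^-$, and it is handled by the $n$-dimensional estimate from \cite{voiculescu1981some}. That estimate is $n\ge3$ dependent: the Lorentz $(n,1)$ norm pairs exactly with the $L^{n,\infty}$ decay of the averaged Fourier transform.

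Finally, the Cauchy property in $|\xi|\to\infty$, uniformly in direction, comes from comparing two points $\xi,\xi'$ through the path in $\R^n$ joining them near infinity. This uses commutativity and the fact that the sphere is connected for $n\ge2$.

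The density of such $f$ in $E_{ac}(X)\mathcal{H}$ together with uniform boundedness of the operators then gives the strong limit $W$.
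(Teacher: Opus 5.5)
There is a genuine gap in Step 3, and the premise behind it is also wrong. You are right that the bounded $n$-tuple theorem cannot be applied as a black box to $\arctan X$. (Incidentally, $\mathrm{Im}\,(x-it)^{-1}=t/(x^2+t^2)$ is even in $x$; the formula you want is $\arctan x=\int_1^\infty \mathrm{Re}\,(x-it)^{-1}\,dt$.)

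The quasicentral modulus $k_{\mathcal{C}_n^-}$ does not vanish on the absolutely continuous part. For commuting self-adjoint tuples $E^{(0)}_{\mathcal{C}_n^-}(\tau)=E_s(\tau)$. So finite-rank $0\le P_m\uparrow I$ with $|[P_m,A_j]|_n^-\to0$ exist only if $E_{ac}=0$, and the wave operator lives exactly where this modulus is nonzero.

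More decisively, the Cook scheme cannot work. Your bound $\|W(t)f-W(s)f\|\le\int_s^t\|(\omega\cdot K)e^{ir\omega\cdot X}f\|\,dr$ involves only $X$, $f$ and the $\mathcal{C}_n^-$-size of $\omega\cdot K$; it never uses that the $Y_j$ commute. Since $\mathcal{C}_n^-$, $n\ge2$, is not the trace class, the Weyl--von Neumann--Kuroda theorem gives $L=L^*\in\mathcal{C}_n^-$ of arbitrarily small norm with $X_1+L$ diagonal. The strong limit of $e^{-ir(X_1+L)}e^{irX_1}$ on $E_{ac}(X)\mathcal{H}\neq0$ cannot exist: it would be an isometry intertwining the absolutely continuous operator $X_1|_{E_{ac}(X)\mathcal{H}}$ with an operator of pure point spectrum. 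By your own Cook argument, the $f\in E_{ac}(X)\mathcal{H}$ with $\int_0^\infty\|Le^{irX_1}f\|\,dr<\infty$ therefore cannot be dense. Hence no bound on the Cook integral of the remainder $K(1-P_m)$ that uses only its $\mathcal{C}_n^-$-norm can hold. The ``$n$-dimensional estimate'' you invoke is also not in \cite{voiculescu1981some}, whose proof is representation-theoretic. The same objection applies to your Cauchy argument along arcs near infinity, which would also need decay faster than $1/R$ on arcs of length comparable to $R$.

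The missing idea is that Theorem 1.5 of \cite{voiculescu1981some} is C*-algebraic and accepts \emph{arbitrary} central unitaries $u_k$ in a dense $*$-subalgebra. The exponentials themselves can therefore be placed in the algebra, and the nonlinearity never arises. The paper fixes a sequence $|\xi^{(k)}|\to\infty$; interlacing sequences then gives the Alexandroff limit. It takes $\mathcal{A}\subset C_b(\R^n)$ generated by $C_0(\R^n)$ and $u_k(x)=e^{i\xi^{(k)}\cdot x}$, and lets $\mathcal{B}$ be the $*$-algebra generated by the $u_k$ and $r_j(x)=(x_j-i)^{-1}$. The representations $\rho_1,\rho_2$ are the functional calculi of $X$ and $Y$. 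This is exactly where commutativity of the $Y_j$ enters, since $\rho_2$ must represent the same algebra.

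The hypotheses are then checked by soft arguments:
\begin{itemize}
\item $\rho_1(b)-\rho_2(b)\in\mathcal{C}_n^-$ follows from your resolvent identity plus Duhamel's formula $e^{iS}-e^{iT}=i\int_0^1e^{itS}(S-T)e^{i(1-t)T}\,dt$ with $S=\xi\cdot X$, $T=\xi\cdot Y$.
\item $\rho_1(u_k)E_{ac}(X)\to0$ weakly by Riemann--Lebesgue.
\item $E_{\mathcal{C}_n^-}(\rho_1(\mathcal{B}))=E_{ac}(X)$ is obtained by compressing to the spectral projections of $[-m,m]^n$. There every generator is an analytic function of the bounded tuple, so the bounded identification $E^{(0)}_{\mathcal{C}_n^-}=E_s$ applies. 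One then lets $m\to\infty$ using monotonicity of $E^{(0)}_{\mathcal{J}}$ along increasing reducing projections.
\end{itemize}
The restriction $n\ge3$ enters only through the hypothesis $p>2$ of that theorem.
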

It should be noted that by commuting we mean strongly commuting in the sense of $\cite{schmudgen2012unbounded}$ when referring to tuples of commuting unbounded operators throughout the paper. When restricted to n-tuples of bounded operators, the theorem was proved in $\cite{voiculescu1981some}$ (see Theorems 2.3 and 1.5) in a more general form. The approach in $\cite{voiculescu1981some}$ relies on representations of C*-algebras. Here, to deal with unbounded operators \cite{reed1979iii}, we will apply Theorem 1.5 of $\cite{voiculescu1981some}$ to certain bounded operators associated with unbounded ones. 
\par
Just as the $n=1$ case of the classical Kato-Rosenblum theorem, Theorem \ref{thm1} is sharp because there is a Kuroda-type theorem when the perturbation is not restricted to $\mathcal{C}_n^-$ so that $\mathcal{C}_n^-$ is the largest ideal for which the theorem holds $\cite{bercovici1989analogue}$. 
\par
As already noted in $\cite{voiculescu1981some}$, unlike the case $n=1$, all wave operators are equal for $n \ge 3$ and this extends to generalizations. In the case $n=2$, with the exception of some weak existence results, it is not known whether strong wave operators, if they exist, are all equal (see the recent discussion in $\cite{voiculescu2025perturbations}$). Difficulties with $n=2$ were already noted in $\cite{voigt1977perturbation}$.
\par
Along similar lines to the way Theorem \ref{thm1} is derived, we also derive a very weak result for $n=2$ (Theorem \ref{thm3}). 
\par
In addition to the introduction and references, there are three more sections. In section 2, we have collected various preliminaries in particular about the quasicentral modulus, singular subspace from normed ideal perturbation point of view ($\cite{voiculescu1979some}$, $\cite{voiculescu1981some}$), and Theorems 1.5 and 2.3 of $\cite{voiculescu1981some}$. In section 3, we carry out the proof of Theorem \ref{thm1} by constructing algebras of operators to which Theorem 1.5 of $\cite{voiculescu1981some}$ is applied. Section 4 is about additional results. On one hand is the very weak result for $n=2$ unbounded operators, while on another hand are consequences of the $n \ge 3$ strong result like a two spaces version. 

\section{Preliminaries}
Let $\tau = (T_1 , \ldots , T_n) \in (B(\mathcal{H}))^n$ be an n-tuple of bounded operators on $\mathcal{H}$ and let $(\mathcal{J}, |\cdot|_{\mathcal{J}})$ be a normed ideal of compact operators. By $\mathcal{R}_1^+$ or $\mathcal{R}_1^+(\mathcal{H})$ we will denote the finite-rank positive contractions on $\mathcal{H}$. The $\mathcal{J}$-singular subspace of $\tau$ defined in $\cite{voiculescu1981some}$ can be described by its orthogonal projection $E_{\mathcal{J}}^{(0)}(\tau)$, which is the largest projection $P$ such that there is an increasing sequence $(A_m)_{m \in \N}$ in $\mathcal{R}_1^+$ such that $A_m \uparrow P$ as $m \to \infty$, so that $\max_{1 \leq j \leq n} |[A_m, T_j]|_{\mathcal{J}} \to 0$. The existence of such a largest projection $E_{\mathcal{J}}^{(0)}(\tau)$ is proved in $\cite{voiculescu1981some}$ (Theorem 1.2) where it was also proved that $E_{\mathcal{J}}^{(0)}(\tau)$ is a central projection of the von Neumann algebra of $\tau$ (i.e. $(\tau \ \cup \ \tau^*)''$), which of course is the same as being a central projection of the commutant $(\tau \ \cup \ \tau^*)'$.
\par
Another way of stating the definition of $E_{\mathcal{J}}^{(0)}(\tau)$ is that it is the largest reducing projection $P$ of $\tau$ such that $k_{\mathcal{J}}(\tau|_{P\mathcal{H}})=0$, where $k_{\mathcal{J}}(\tau)$ is the quasicentral modulus ($\cite{voiculescu1979some}$) which is defined as follows:
\begin{equation*}
    \text{the least } C \in [0, \infty] \text{ so that } \exists \hspace{.2em} A_m \in \mathcal{R}_1^+ \text{ such that } A_m \uparrow I \text{ and }  \max_{1 \leq j \leq n} |[A_m, T_j]|_{\mathcal{J}} \to C \text { as } m \to \infty.
\end{equation*}
In the case of an n-tuple $\tau$ of commuting self-adjoint operators and $\mathcal{J}= \mathcal{C}_n^-$, it is proved in $\cite{voiculescu1979some}$ (an immediate consequence of Theorem 4.5) that $E_{\mathcal{C}_n^-}^{(0)}(\tau)$ coincides with the orthogonal projection onto the singular subspace of $\tau$ with respect to Lebesgue measure $E_s(\tau)$. 
\par
In $\cite{voiculescu1979some}$, an n-tuple $\tau$ such that $E_{\mathcal{J}}^{(0)}(\tau)=I$ is called $\mathcal{J}$-well behaved (Definition 2.1). More generally, a vector space of operators with countable basis is called $\mathcal{J}$-well behaved if all finite subsets are $\mathcal{J}$-well behaved (Definition 2.1). 
\par
The orthocomplement of $E_{\mathcal{J}}^{(0)}(\tau)$, that is $I-E_{\mathcal{J}}^{(0)}(\tau)$, will be denoted by $E_{\mathcal{J}}(\tau)$. In the case of an n-tuple of commuting self-adjoint operators and $\mathcal{J} = \mathcal{C}_n^-$, we have $E_{\mathcal{C}_n^-}(\tau)=E_{ac}(\tau)$ where $E_{ac}(\tau)$ is the orthogonal projection onto the Lebesgue absolutely continuous subspace of $\tau$.
\par
In the case of a vector space of operators $\mathcal{X}$ with a countable basis, $E_{\mathcal{J}}^{(0)}(\mathcal{X})$ can be defined as 
\begin{equation*}
    \bigwedge \{E_{\mathcal{J}}^{(0)}(\beta) | \beta \subset \mathcal{X}, \beta \text{ is finite}\}
\end{equation*}
which is the largest reducing projection on which $\mathcal{X}$ is $\mathcal{J}$-well behaved (see the discussion preceeding Theorem 1.2 in $\cite{voiculescu1981some}$). We also consider $E_{\mathcal{J}}(\mathcal{X}) = I-E_{\mathcal{J}}^{(0)}(\mathcal{X})$. Note also that $E_{\mathcal{J}}^{(0)}(\mathbb{C}[\tau, \tau^*])=E_{\mathcal{J}}^{(0)}(\tau)$ and $E_{\mathcal{J}}(\mathbb{C}[\tau, \tau^*])=E_{\mathcal{J}}(\tau)$ where $\mathbb{C}[\tau, \tau^*]$ is the *-algebra generated by $\tau$. Even more, if $\mathcal{X}$ is $\mathcal{J}$-well behaved, then $\mathcal{X}$ can be enlarged using a finite number of operators obtained by analytic functional calculus from operators in $\mathcal{X}$ and will remain $\mathcal{J}$-well behaved. We give a brief proof of this fact: By the definitions, we must show that the quasicentral modulus of any finite subset of the enlarged $\mathcal{X}$, denoted $\tilde{\mathcal{X}}$, remains zero. It suffices to show that given any sequence $A_m \in \mathcal{R}_1^+$ with $A_m \uparrow I$ and $\max_{1 \leq j \leq n} |[A_m, T_j]|_{\mathcal{J}} \to 0$, we have $\max_{1 \leq j \leq n} |[A_m, f(T_j)]|_{\mathcal{J}} \to 0$ where the assignment $f \to f(T_j)$ is the analytic functional calculus. Since the resolvents of $T_j$ are norm-bounded on compact subsets of the resolvent set of $T_j$, we have
\begin{equation*} 
\begin{split}
    \max_{1 \leq j \leq n}|[A_m, f(T_j)]|_{\mathcal{J}}&= \frac{1}{2\pi } \max_{1 \leq j \leq m} \left|\int_{\Gamma} f(\lambda) [A_m,(\lambda-T_j)^{-1}] d\lambda\right|_{\mathcal{J}} \\
    &\leq \frac{1}{2\pi}  \max_{1 \leq j \leq n} \int_{\Gamma} |f(\lambda)| \cdot ||(\lambda-T_j)^{-1}||\cdot |[\lambda -T_j, A_m]|_{\mathcal{J}} \cdot ||(\lambda-T_j)^{-1}|| \cdot |d\lambda| \\
    &\leq C_{\Gamma, f} \cdot \max_{1 \leq j \leq n} |[A_m, T_j]|_{\mathcal{J}} \to 0
\end{split}
\end{equation*}
as $m \to \infty$ where $\Gamma$ is any valid curve enclosing the spectrum of $T_j$, $f$ is analytic on $\Gamma$ and its interior, and $C_{\Gamma, f}$ is a sufficently large constant depending on the choice of $\Gamma$ and $f$. This proves the assertion. One easily deduces from the statement that $E_{\mathcal{J}}^{(0)}(\mathcal{X})$ and $E_{\mathcal{J}}(\mathcal{X})$ do not change under such extensions of $\mathcal{X}$. 
\\
The next lemma is a further characterization of $E_{\mathcal{J}}^{(0)}(\mathcal{X})$. 
\begin{lemma} \label{lemma1} Let $\mathcal{X} \subset B(\mathcal{H})$ be a vector subspace with a countable basis. Then, the following are equivalent:
\begin{enumerate}
    \item[(i)] $\xi \in E_{\mathcal{J}}^{(0)}(\mathcal{X}) \mathcal{H}$
    \item[(ii)] There exists $A_m \in \mathcal{R}_1^+$ such that $|[A_m,  X]|_{\mathcal{J}} \to 0$ for all $X \in \mathcal{X}$ and $||A_m \xi-\xi|| \to 0$ as $m \to \infty$
\end{enumerate}
\end{lemma}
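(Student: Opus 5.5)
We prove the two implications separately. The direction (i)$\Rightarrow$(ii) is essentially the definition, together with a diagonal argument over a countable basis. The direction (ii)$\Rightarrow$(i) is obtained by building a sequence in $\mathcal{R}_1^+$ increasing to a projection $Q$ with $\xi \in Q\mathcal{H}$, and then invoking maximality of $E_{\mathcal{J}}^{(0)}(\mathcal{X})$.

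\textbf{(i)$\Rightarrow$(ii).} Let $P = E_{\mathcal{J}}^{(0)}(\mathcal{X})$ and let $(X_k)_{k\in\N}$ be a countable basis of $\mathcal{X}$. Since $\mathcal{X}$ is $\mathcal{J}$-well behaved on $P\mathcal{H}$ and $P$ reduces $\mathcal{X}$, for each $m$ the finite set $\{X_1P,\dots,X_mP\}$ acting on $P\mathcal{H}$ has quasicentral modulus $0$. So we can pick $B_m \in \mathcal{R}_1^+(P\mathcal{H})$ with $\max_{k\le m}|[B_m,X_k]|_{\mathcal{J}} < 1/m$. We regard $B_m$ as an operator on $\mathcal{H}$ (zero on $(I-P)\mathcal{H}$); then $[B_m,X_k] = [B_m, X_kP]$ because $P$ commutes with $X_k$.

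We also want $\|B_m\xi-\xi\|<1/m$. To arrange this, take a sequence in $\mathcal{R}_1^+(P\mathcal{H})$ increasing to $P$ and witnessing the vanishing modulus for $X_1,\dots,X_m$; it converges strongly to $P$. Choose the index large enough to get both estimates at once.

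By linearity, $|[B_m,X]|_{\mathcal{J}}\to 0$ for every $X$ in the span, since each $X$ is a finite combination of the $X_k$. This gives (ii).

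\textbf{(ii)$\Rightarrow$(i).} This is the main point. Given $A_m$ as in (ii), the first task is to upgrade to an increasing sequence. Following the proof of Theorem 1.2 in \cite{voiculescu1981some}, I would consider the set $\mathcal{S}$ of vectors $\eta$ for which some sequence in $\mathcal{R}_1^+$ asymptotically commutes in $\mathcal{J}$-norm with $\mathcal{X}$ and fixes $\eta$ asymptotically. One shows $\mathcal{S}$ is a closed subspace:
\begin{itemize}
\item for closure under sums, replace two sequences $A_m, A'_m$ by $A_m\vee A'_m$-type combinations such as $1-(1-A_m)(1-A'_m)$, suitably symmetrized, or by $f(A_m+A'_m)$ with $f(t)=\min(t,1)$ approximated by polynomials, keeping commutator control;
\item closure under limits follows by a diagonal argument.
\end{itemize}
Then $\mathcal{S}$ is invariant under $\mathcal{X}$ and $\mathcal{X}^*$: if $A_m\eta\to\eta$, then $A_mX\eta = XA_m\eta + [A_m,X]\eta \to X\eta$. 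So the projection $Q$ onto $\mathcal{S}$ reduces $\mathcal{X}$.

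Next, by separability, fix a dense sequence $(\eta_k)$ in $\mathcal{S}$. Combining finitely many witnessing sequences at a time, and using the standard trick from \cite{voiculescu1979some} of building increasing finite rank approximate units (choose $A'_{m+1}\ge A'_m$ by quasicentral approximate unit constructions, e.g. convex combinations), we get an increasing sequence in $\mathcal{R}_1^+$ converging to $Q$ with $|[\cdot,X]|_{\mathcal{J}}\to 0$ for all $X\in\mathcal{X}$. Hence $\mathcal{X}$ is $\mathcal{J}$-well behaved on $Q\mathcal{H}$. Maximality of $E_{\mathcal{J}}^{(0)}(\mathcal{X})$ then gives $Q\le E_{\mathcal{J}}^{(0)}(\mathcal{X})$, and since $\xi\in\mathcal{S}$, we conclude $\xi \in E_{\mathcal{J}}^{(0)}(\mathcal{X})\mathcal{H}$.

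\textbf{Main obstacle.} The hard step is producing a monotone sequence increasing to $Q$ from non-monotone approximate units. I would handle it exactly as in the proof of Theorem 1.2 of \cite{voiculescu1981some}, which works directly with this kind of "asymptotic fixing" characterization. If possible, I would simply cite that the largest projection there is obtained as the closed span of such vectors.
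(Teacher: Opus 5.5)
Your (i)$\Rightarrow$(ii) argument is correct and is essentially the paper's. The gap is in (ii)$\Rightarrow$(i). The two steps that carry all the content are not actually carried out.

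\emph{Combining approximate units.} Making $\mathcal{S}$ a subspace requires a single sequence in $\mathcal{R}_1^+$ that asymptotically fixes both $\eta$ and $\eta'$, and the constructions you propose do not achieve this.
\begin{itemize}
\item For $f(A_m+A'_m)$ with $f(t)=\min(t,1)$, take $\mathcal{H}=\mathbb{C}^2$, $\mathcal{X}=\mathbb{C}I$, and constant sequences: $A_m=A$ the projection onto $e_1$, and $A'_m=A'$ the projection onto $(e_1-e_2)/\sqrt{2}$. Then $e_1$ is not an eigenvector of $A+A'$, whose eigenvalues are $1\pm 1/\sqrt{2}$. So $e_1$ does not lie in the spectral subspace where $A+A'\geq 1$, and $f(A+A')e_1\neq e_1$ even though $Ae_1=e_1$.
\item Product forms built from $1-(1-A_m)(1-A'_m)$ fail because $A_m$ and $A'_m$ need not commute. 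The Jordan symmetrization need not be a contraction. The forms $1-(1-A_m)(1-A'_m)(1-A_m)$ and $1-(1-A_m)^{1/2}(1-A'_m)(1-A_m)^{1/2}$ fix $\eta$ but not $\eta'$: in the same example both equal $I-\frac{1}{2}P_{e_2}$, which does not fix $(e_1-e_2)/\sqrt{2}$.
\end{itemize}

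\emph{Monotone upgrade.} The passage from non-monotone approximate units to an increasing sequence converging to $Q$ is only gestured at. Your fallback, citing that the largest projection is the closed span of such vectors, amounts to citing (ii)$\Rightarrow$(i) itself.

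The idea that makes $\mathcal{S}$ unnecessary is weak compactness, as in the paper.
\begin{itemize}
\item Pass to a subsequence with $A_m\to A$ weakly. Since $\|[A_m,X]\|\leq|[A_m,X]|_{\mathcal{J}}\to 0$, you get $[A,X]=0$ for all $X\in\mathcal{X}$, and also $[A,X^*]=0$ because $A=A^*$. Since $A_m\xi\to\xi$ in norm, $A\xi=\xi$.
\item Hence the range projection $P$ of $A$ reduces $\mathcal{X}$ and $P\xi=\xi$.
\item The compressions $PA_mP\in\mathcal{R}_1^+(P\mathcal{H})$ still have $\mathcal{J}$-small commutators with $\mathcal{X}|_{P\mathcal{H}}$, and they converge weakly to $A|_{P\mathcal{H}}$, which has trivial kernel.
\item Lemma 1.1 of \cite{voiculescu1981some} says that weak convergence to an injective operator already forces zero quasicentral modulus. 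This disposes of the monotonicity issue, and maximality then gives $P\leq E_{\mathcal{J}}^{(0)}(\mathcal{X})$.
\end{itemize}
No combining of different approximate units is needed.

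Your route can be repaired, but the repairs are not in the proposal:
\begin{itemize}
\item Combine via $1-\bigl(1-\frac{1}{2}(A_m+A'_m)\bigr)^N$ with $N\to\infty$ slowly. The bound $A_m\leq A_m+A'_m$ shows that the spectral projection $E$ of $\frac{1}{2}(A_m+A'_m)$ for $[0,\delta]$ satisfies $\|E\eta\|\leq\sqrt{2\delta}\,\|\eta\|+\|A_m\eta-\eta\|$.
\item Build the increasing sequence as $A_{k+1}=A_k+(1-A_k)^{1/2}B(1-A_k)^{1/2}$, with $B$ taken far out in a sequence converging strongly to the identity of $Q\mathcal{H}$.
\end{itemize}
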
 
\begin{proof} 
    \underline{\textbf{(i) $\Rightarrow$ (ii):}} Since $k_{\mathcal{J}}(\beta|_{E_{\mathcal{J}}^{(0)}(\mathcal{H})})=0$ for all finite $\beta \subset \mathcal{X}$, we can find a sequence $A_m \in \mathcal{R}_1^+$ such that 
    \begin{equation*}
        A_m \uparrow E_{\mathcal{J}}^{(0)}(\mathcal{X}) \text{ and } |[A_m, X]|_{\mathcal{J}} \to 0 \text{ for all } X \in \beta  \text{ as } m \to \infty.
    \end{equation*}
    If $(X_k)_{k \in \N}$ be a basis of $\mathcal{X}$, then for each $m \in \N$ consider the finite set $\beta_m = \{X_1,  \ldots , X_m \}$. Since $E_{\mathcal{J}}^{(0)}(\mathcal{X}) = \bigwedge_{m=1}^\infty E_{\mathcal{J}}^{(0)}(\beta_m)$ and $\xi \in E_{\mathcal{J}}^{(0)}(\mathcal{X}) \mathcal{H}$, we have $E_{\mathcal{J}}^{(0)}(\beta_m)\xi = \xi$. For each $\beta_m$, there exists an operator $A_m \in \mathcal{R}_1^+$ so that 
    \begin{equation*}
        ||A_m \xi -\xi||=||A_m \xi - E_{\mathcal{J}}^{(0)}(\beta_m)\xi||< \frac{1}{m} \text{ and } |[A_m, X_j]|_{\mathcal{J}} < \frac{1}{m} \text{ for all } 1 \leq j \leq m.
    \end{equation*}
    By construction, these particular $(A_m)_{m \in \N}$ satisfy (ii). 
    \\
    \\
    \underline{\textbf{(ii) $\Rightarrow$ (i):}} Passing to a subsequence, we may assume that, by the Banach–Alaoglu compactness theorem, $A_m$ converges weakly to an operator $A$ that satisfies $ 0 \leq A \leq I$, $A \xi =\xi$, and $[A,X]=0$ for all $X \in \mathcal{X}$. Then, the orthogonal projection $P$ onto $A \mathcal{H}$ commutes with $\mathcal{X}$ and so we have $$|[PA_nP, X|_{P\mathcal{H}}]|_\mathcal{J} = |(P[A_n, X]P)|_{P\mathcal{H}}|_\mathcal{J} \leq |P[A_n, X]P|_\mathcal{J} \leq |[A_n, X]|_\mathcal{J} \to 0, $$ $PA_nP \xrightarrow{w} PAP$, and Ker$(PAP|_{P \mathcal{H}})=\{0\}$. By Lemma 1.1 of $\cite{voiculescu1981some}$, $k_{\mathcal{J}}(\beta|_{P \mathcal{H}})=0$ for all finite $\beta \subset \mathcal{X}$. Thus, $\mathcal{X}$ is $\mathcal{J}$-well behaved on the reducing subspace $P\mathcal{H}$, which by definition yields $P \leq E_{\mathcal{J}}^{(0)}(\mathcal{X})$. Moreover, $A \xi =\xi$ implies $P \xi = \xi$, and so $P \leq E_{\mathcal{J}}^{(0)}(\mathcal{X})$ then gives (i). 
\end{proof}
We conclude the preliminaries section by recalling a C*-algebraic Kato-Rosenblum type theorem for bounded operators from $\cite{voiculescu1981some}$. 
\begin{theorem}\label{thm2}(\cite{voiculescu1981some}, Theorem 1.5)
    Let $\mathcal{A}$ be a unital separable C* algebra with a dense *-subalgebra $1 \in \mathcal{B} \subset \mathcal{A}$ which is a vector space of countable dimension. Let $\rho_1, \rho_2$ be two non-degenerate *-representations of $\mathcal{A}$ on $\mathcal{H}$ and assume $p>2$ is so that $\rho_1(b)-\rho_2(b) \in \mathcal{C}_p^-$ for all $b \in \mathcal{B}$. Let further $u_n \in \mathcal{B} \ \cap \ Z(\mathcal{A)}$ (where $Z(\mathcal{A})$ is the center of $\mathcal{A}$) be unitary elements and assume that \begin{equation*}
        w- \lim_{n \to \infty}  \rho_1(u_n)E_{\mathcal{C}_p^-}(\rho_1(\mathcal{B}))=0.
    \end{equation*}
    Then, the following strong limit exists:
    \begin{equation*}
        W=s- \lim_{n \to \infty} \rho_2(u_n^*) \rho_1(u_n)E_{\mathcal{C}_p^-}(\rho_1(\mathcal{B})).
    \end{equation*}
    
\end{theorem}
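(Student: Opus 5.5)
Since this is Theorem 1.5 of \cite{voiculescu1981some}, it may be taken as given. Still, here is the route I would follow to prove it from the preliminaries above. The overall strategy is a doubling trick, a splitting by quasicentral approximate units, and a weak-to-strong upgrade by compactness.

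\emph{Step 1: doubling.} Pass to $\mathcal{H}\oplus\mathcal{H}$ with the representation $\rho=\rho_1\oplus\rho_2$. Introduce the partial isometry $J=\begin{pmatrix}0&0\\ I&0\end{pmatrix}$. The hypothesis $\rho_1(b)-\rho_2(b)\in\mathcal{C}_p^-$ for $b\in\mathcal{B}$ says exactly that $[\rho(b),J]\in\mathcal{C}_p^-$ for all $b\in\mathcal{B}$. The operators whose limit we need are the lower-left corners of $\rho(u_n^*)J\rho(u_n)$. Since the $u_n$ are central unitaries, $\rho(u_n)$ commutes with $\rho(\mathcal{A})$. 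So the problem becomes: show that $\rho(u_n^*)J\rho(u_n)\,P$ is strongly Cauchy, where $P=E_{\mathcal{C}_p^-}(\rho_1(\mathcal{B}))\oplus 0$.

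\emph{Step 2: enlarge the algebra without changing the singular projection.} Consider the countable-dimensional space $\mathcal{X}$ spanned by $\rho(\mathcal{B})$, $J$ and $J^*$. I would show that $E^{(0)}_{\mathcal{C}_p^-}(\mathcal{X})$ restricted to the first summand still equals $E^{(0)}_{\mathcal{C}_p^-}(\rho_1(\mathcal{B}))$. One direction is trivial. For the other, use Lemma~\ref{lemma1}: take $A_m\in\mathcal{R}_1^+$ quasicentral for $\rho_1(\mathcal{B})$ on a vector $\xi$ of its singular subspace. Transport $A_m$ to the second summand through $J$ by forming $A_m\oplus A_m$. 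The commutator with $J$ is $0$, and the commutators with $\rho(b)$ differ from those with $\rho_1(b)$ by $[A_m,\rho_2(b)-\rho_1(b)]$. I expect this to be \textbf{the main obstacle}. These terms are products of a finite-rank contraction with a fixed $\mathcal{C}_p^-$ operator, and they need not tend to $0$ in $|\cdot|_{p}^-$. To handle them I would first replace $A_m$ by convex combinations $\sum_k c_k A_{m_k}$. This is a Ces\`aro averaging on which the $\mathcal{C}_p^-$ norm is subadditive. The point is that $[A,K]$ for fixed $K\in\mathcal{C}_p^-$ and $A$ weakly convergent to a projection can be made small after averaging. That is the usual ``tails of a compact operator'' argument, and it uses the fact that the $\mathcal{C}_p^-$ norm is continuous on finite-rank truncations of a fixed element.

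\emph{Step 3: Cauchy estimate on the absolutely continuous part.} Let $Q=I-E^{(0)}_{\mathcal{C}_p^-}(\mathcal{X})$. By Step 2, $Q$ restricts to $P$ on the relevant corner, and it reduces both $\rho(\mathcal{B})$ and $J$. For $\eta=P\xi$ and $m,n$, write
\[
\rho(u_n^*)J\rho(u_n)\eta-\rho(u_m^*)J\rho(u_m)\eta
=\rho(u_n^*)\bigl[J,\rho(u_nu_m^*)\bigr]\rho(u_m)\eta .
\]
Centrality has been used here. Now approximate $u_nu_m^*$ by elements of $\mathcal{B}$. The commutator then lies in $\mathcal{C}_p^-$, with norm uniformly bounded on the unitary group of the finite-dimensional pieces that occur. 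This is where the hypothesis $p>2$ enters, through the duality $(\mathcal{C}_p^-)^*=\mathcal{C}_{p'}^+$ and Theorem~1.2-type estimates of \cite{voiculescu1981some}. Such a commutator is a norm limit of finite-rank operators $F$. The weak hypothesis $w$-$\lim\rho_1(u_m)P=0$ then gives $\|F\rho(u_m)\eta\|\to 0$. The residual term is controlled by the quasicentral modulus on $Q$, which is not zero but is finite and has the right homogeneity. Combining these shows that the differences tend to $0$ as $m,n\to\infty$, which establishes the strong limit $W$.
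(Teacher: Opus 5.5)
The paper gives no proof of this statement. It imports it from \cite{voiculescu1981some} (Theorem 1.5), so your first sentence is exactly what the paper does, and that is all that is needed here. The sketch you add, however, would not work as a proof. Below, write $E^{(0)}$ for $E^{(0)}_{\mathcal{C}_p^-}$.

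\emph{Step 2 claims something false.} Because $J, J^*\in\mathcal{X}$, the reducing projection $E^{(0)}(\mathcal{X})$ commutes with $J$ and $J^*$. Hence it equals $Q_1\oplus Q_1$, where $Q_1$ reduces both $\rho_1(\mathcal{B})$ and $\rho_2(\mathcal{B})$ and $Q_1\le E^{(0)}(\rho_1(\mathcal{B}))\wedge E^{(0)}(\rho_2(\mathcal{B}))$. Your equality would therefore force $E^{(0)}(\rho_1(\mathcal{B}))\le E^{(0)}(\rho_2(\mathcal{B}))$. That already fails for $\rho_2=U\rho_1(\cdot)U^*$ with $U-I$ of finite rank mixing a unit vector of $E^{(0)}(\rho_1(\mathcal{B}))\mathcal{H}$ with one orthogonal to it, since then $E^{(0)}(\rho_2(\mathcal{B}))=UE^{(0)}(\rho_1(\mathcal{B}))U^*$.

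The averaging cannot repair this. Suppose $A_m\uparrow P$ strongly and $K\in\mathcal{C}_p^-$ (where finite-rank operators are dense). Then $|[A,K]-[P,K]|_p^-\to 0$ uniformly over convex combinations $A$ of $\{A_m: m\ge m_0\}$ as $m_0\to\infty$. And $[P,\rho_2(b)-\rho_1(b)]=[P,\rho_2(b)]$ is nonzero in general.

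\emph{Step 3 omits the essential mechanism.} Your identity reduces matters to showing $\|D_{nm}\rho_1(u_m)\eta\|\to 0$ with $D_{nm}=\rho_1(u_nu_m^*)-\rho_2(u_nu_m^*)$. Note that $u_nu_m^*$ is already in $\mathcal{B}$, so nothing needs approximating. The real difficulty is that $D_{nm}$ changes with $(n,m)$:
\begin{itemize}
\item There is no a priori bound on $|D_{nm}|_p^-$ uniform in $(n,m)$. In the setting of Theorem~\ref{thm1}, Lemma~\ref{lemma2}(b) only gives a bound growing like $|\xi^{(n)}-\xi^{(m)}|$.
\item Even a uniform bound would not produce a single finite-rank $F$ approximating all $D_{nm}$ in norm. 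That is what you need in order to use $\rho_1(u_m)E\to 0$ weakly.
\item Bounding the residual by the quasicentral modulus on $Q$ does not make it tend to $0$, since that modulus is a fixed positive number there.
\end{itemize}
Finally, $p>2$ is invoked only by name; no step of the sketch actually uses it. Yet for $p=2$, i.e.\ $n=2$, the paper stresses that only weak results such as Theorem~\ref{thm3} are known. A correct proof has to obtain control that is uniform in $(n,m)$, and it must use $p>2$ in an essential way. Your sketch supplies neither.
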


\section{The Extension to Unbounded Operators}
To prove Theorem $\ref{thm1}$, we will need to replace the unbounded operators by associated bounded operators. The next lemma records some standard technical facts we will use. 
\begin{lemma} \label{lemma2}
    Let $(\mathcal{J}, |\cdot|_{\mathcal{J}})$ be a normed ideal and let $X,Y$ be unbounded self-adjoint operators so that $X=Y+K$ where $K \in \mathcal{J}$.
    \begin{enumerate}
        \item[(a)] The resolvents (bounded operators) satisfy $(X-iI)^{-1}-(Y-iI)^{-1} \in \mathcal{J}$
        \item[(b)] We also have $\exp(iX)-\exp(iY) \in \mathcal{J}$.
    \end{enumerate}
\end{lemma}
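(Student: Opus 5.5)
Part (a) follows from the second resolvent identity, once we check that domains match. Since $K \in \mathcal{J}$ is bounded, $X = Y + K$ holds on $D(X) = D(Y)$. For $\xi \in \mathcal{H}$, set $\eta = (Y-iI)^{-1}\xi \in D(Y) = D(X)$. Then
\begin{equation*}
(X-iI)\eta = (Y-iI)\eta + K\eta = \xi + K(Y-iI)^{-1}\xi .
\end{equation*}
Applying $(X-iI)^{-1}$ to both sides gives
\begin{equation*}
(Y-iI)^{-1} - (X-iI)^{-1} = (X-iI)^{-1} K (Y-iI)^{-1}.
\end{equation*}
The right-hand side is a product of two bounded operators (each of norm at most $1$, since $X,Y$ are self-adjoint) with $K \in \mathcal{J}$. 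Because $\mathcal{J}$ is a two-sided ideal, it lies in $\mathcal{J}$, with $|\cdot|_{\mathcal{J}}$-norm at most $|K|_{\mathcal{J}}$.

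For part (b) I would use Duhamel's formula. Let $\xi \in D(Y) = D(X)$ and put $F(t) = \exp(-itY)\exp(itX)\xi$. Differentiating gives $F'(t) = \exp(-itY)\, i(X-Y)\exp(itX)\xi = i\exp(-itY) K \exp(itX)\xi$, which is continuous in $t$ because $K$ is bounded and the unitary groups are strongly continuous. Integrating from $0$ to $1$ and multiplying on the left by $\exp(iY)$ yields, first on the dense set $D(X)$ and then on all of $\mathcal{H}$ by boundedness,
\begin{equation*}
\exp(iX) - \exp(iY) = i\int_0^1 \exp(i(1-t)Y)\, K\, \exp(itX)\, dt .
\end{equation*}
The integrand is a strongly continuous function of $t$ with values in $\mathcal{J}$, and its $\mathcal{J}$-norm is bounded by $|K|_{\mathcal{J}}$.

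The main obstacle is showing that this strong integral lies in $\mathcal{J}$. A general normed ideal need not be separable (for example the Lorentz ideal versus its closure of finite ranks), so we cannot simply call it a Bochner integral in $\mathcal{J}$. I would handle this by duality, using the lower semicontinuity of the norm of the ideal.

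Write $K = \sum_k s_k \langle \cdot, e_k\rangle f_k$ (Schmidt decomposition) and let $K_N$ be its rank-$N$ truncation. The integral $I_N$ with $K_N$ in place of $K$ is a finite sum of rank-one-type integrals. Each term has the form $\int_0^1 \langle \cdot, \exp(-itX)e_k\rangle \exp(i(1-t)Y)f_k\, dt$, which is a Bochner integral in the trace class $\mathcal{C}_1$, taking values in rank-one operators of norm at most $1$. Hence $I_N \in \mathcal{J}$ with $|I_N|_{\mathcal{J}} \le |K_N|_{\mathcal{J}} \le |K|_{\mathcal{J}}$, using the norm-nonincreasing properties of the ideal norm on truncations.

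Next, $I_N \to I$ in operator norm, because $K_N \to K$ in norm. For symmetric-norm ideals the norm is lower semicontinuous under norm (indeed weak) limits when $\mathcal{J}$ is taken as the maximal ideal of its symmetric norming function. The Lorentz ideal $\mathcal{C}_n^-$ is such an ideal, which gives $I \in \mathcal{J}$.

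If one wants to avoid this point for general $\mathcal{J}$, I would state the argument for ideals with this Fatou property, which covers $\mathcal{C}_n^-$, the only case used later.
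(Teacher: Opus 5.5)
Your proof is correct and follows the paper's route: the second resolvent identity for (a) and the Duhamel formula for (b), where your integrand $e^{i(1-t)Y}Ke^{itX}$ is just the mirror image of the paper's $e^{itX}Ke^{i(1-t)Y}$. Your care over domains and over why the strong integral lands in $\mathcal{J}$ addresses a point the paper takes for granted, and your Fatou-property argument is valid since $\mathcal{C}_n^-$ is the maximal ideal of its norming function.

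Your parenthetical example is wrong for $\mathcal{C}_n^-$, though. Finite-rank operators are dense in it: dominated convergence gives $|K-K_N|_n^-=\sum_j s_{N+j}\,j^{-1+1/n}\to 0$. Hence $t\mapsto e^{i(1-t)Y}Ke^{itX}$ is already $|\cdot|_n^-$-continuous, and the integral is an honest Riemann integral in $\mathcal{C}_n^-$, so the detour through $I_N$ is unnecessary. The same continuity is also the right justification for your bound $|I_N|_{\mathcal{J}}\le|K_N|_{\mathcal{J}}$. Termwise trace-class estimates alone would only give $\sum_{k\le N}s_k$, which need not stay bounded.
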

\begin{proof} \;
    \begin{enumerate}
        \item[(a)] This is immediate from $(X-iI)^{-1}-(Y-iI)^{-1}=-(X-iI)^{-1}K(Y-iI)^{-1}$.
        \item[(b)] We have 
        $$\exp(iX)-\exp(iY)=\exp(itX)\exp(i(1-t)Y)|_{t=0}^1 = i \int_0^1 \exp(itX)K \exp(i(1-t)Y) dt$$ which gives the desired result. 
    \end{enumerate}
\end{proof}
To prove Theorem \ref{thm1}, it suffices under the assumptions of the theorem to show that the following strong limit exists:
\begin{equation*}
    s-\lim_{k \to \infty} \exp\left(-i \sum_{1 \leq j \leq n} \xi_j^{(k)} Y_j \right) \exp\left(i \sum_{1 \leq j \leq n} \xi_j^{(k)} X_j \right) E_{ac}(X_1, \ldots, X_n)
\end{equation*}
where $(\xi^{(k)})_{k \in \N}$ is a sequence in $\R^n$ so that $|\xi^{(k)}| \to \infty$ as $k \to \infty$. 
\par 
This will be achieved as an application of Theorem $\ref{thm2}$ to a C*-algebra $\mathcal{A}$ with dense subalgebra $\mathcal{B}$, unitary elements $(u_k)_{k \in \N}$ and representations $\rho_1, \rho_2$ which we now proceed to describe. The ideal $(\mathcal{J}, |\cdot|_{\mathcal{J}})$ will be $(\mathcal{C}_n^-, |\cdot|_{n}^-)$ where $n \ge 3$. The C* algebra $\mathcal{A}$ will be the separable C*-subalgebra of the C*-algebra $C_b(\R^n)$, bounded continuous functions on $\R^n$, which is generated by $C_0(\R^n)$, the continuous functions on $\R^n$ vanishing at $\infty$ (Alexandroff compactification) and the sequence $u_k \in C_b(\R^n)$ where $k \in \{0\} \cup \N$ given by 
\begin{equation*}
    u_0 \equiv 1 \text{ and } u_k(x_1, \ldots, x_n) = \exp \left(i \sum_{1 \leq j \leq n} \xi_j^{(k)} x_j \right).
\end{equation*}
The representations $\rho_1$, $\rho_2$ arise from $C_b(\R^n)$ functional calculus for $(X_1, \ldots, X_n)$ and $(Y_1, \ldots, Y_n)$, that is for all $f \in \mathcal{A}$, 
\begin{equation*}
    \rho_1(f)= f(X_1, \ldots, X_n) \ \text{ and } \ \rho_2(f)=f(Y_1, \ldots, Y_n).
\end{equation*}
The *-subalgebra $\mathcal{B}$ is the *-subalgebra of $\mathcal{A}$ generated (algebraically) by the functions $(u_k)_{k \in \{0\} \cup\N}$ and the functions $r_j(x_1, \ldots, x_n)= (x_j-i)^{-1}$, $1 \leq j \leq n$. 
\par
The next step in deriving Theorem $\ref{thm1}$ from Theorem $\ref{thm2}$ will be to identify $E^{(0)}_{\mathcal{C}_n^-}(\rho_1(\mathcal{B}))$ with the Lebesgue singular subspace projection of the operators $(X_1 , \ldots , X_n)$. We begin with a general observation about $\mathcal{J}$-singular subspace projections. 
\begin{lemma} \label{lemma3}
    If $\mathcal{X}=\mathcal{X}^* \subset B(\mathcal{H})$ is a vector space with a countable basis and $P_n, P \in (\mathcal{X})'$ are reducing projections such that $P_n \uparrow P$ as $n \to \infty$, then 
    \begin{equation*}
        E_{\mathcal{J}}^{(0)}(\mathcal{X}|_{P_n \mathcal{H}}) \uparrow E_{\mathcal{J}}^{(0)}(\mathcal{X}|_{P \mathcal{H}})
    \end{equation*}
    as $n \to \infty$.
\end{lemma}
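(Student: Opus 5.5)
We would prove Lemma \ref{lemma3} by working throughout inside $\mathcal{H}$, regarding each $E_{\mathcal{J}}^{(0)}(\mathcal{X}|_{Q\mathcal{H}})$ (for a reducing projection $Q\in(\mathcal{X})'$) as a projection on $\mathcal{H}$ with $E_{\mathcal{J}}^{(0)}(\mathcal{X}|_{Q\mathcal{H}})\le Q$. Lemma \ref{lemma1}, applied on $Q\mathcal{H}$, characterizes this subspace: $\eta\in Q\mathcal{H}$ lies in it if and only if there are $A_m\in\mathcal{R}_1^+(Q\mathcal{H})$ with $|[A_m,X|_{Q\mathcal{H}}]|_{\mathcal{J}}\to 0$ for all $X\in\mathcal{X}$ and $\|A_m\eta-\eta\|\to0$.

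\emph{Step 1: the sequence is increasing and bounded by the limit.} Let $Q\le Q'$ be reducing projections in $(\mathcal{X})'$, and let $\eta\in E_{\mathcal{J}}^{(0)}(\mathcal{X}|_{Q\mathcal{H}})\mathcal{H}$ with witnesses $A_m$. Then $A_m$, extended by $0$ on $(Q'-Q)\mathcal{H}$, is a finite-rank positive contraction on $Q'\mathcal{H}$. Because $Q$ commutes with $\mathcal{X}$, its commutator with $X|_{Q'\mathcal{H}}$ is the old commutator extended by zero, which has the same $\mathcal{J}$-norm. So $\eta\in E_{\mathcal{J}}^{(0)}(\mathcal{X}|_{Q'\mathcal{H}})\mathcal{H}$ by Lemma \ref{lemma1}. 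Applying this to $P_n\le P_{n+1}\le P$ shows that $E_n:=E_{\mathcal{J}}^{(0)}(\mathcal{X}|_{P_n\mathcal{H}})$ is increasing and bounded by $E:=E_{\mathcal{J}}^{(0)}(\mathcal{X}|_{P\mathcal{H}})$. Hence $E_n\uparrow F$ for some projection $F\le E$.

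\emph{Step 2: $E\le F$.} We expect this to be the main step. Take $\eta\in E\mathcal{H}$ with witnesses $A_m\in\mathcal{R}_1^+(P\mathcal{H})$. Compress them to $B_m:=P_nA_mP_n$, a finite-rank positive contraction on $P_n\mathcal{H}$. Since $P_n$ commutes with $\mathcal{X}$, we have $[B_m,X]=P_n[A_m,X]P_n$, so $|[B_m,X]|_{\mathcal{J}}\le|[A_m,X]|_{\mathcal{J}}\to0$ for every $X\in\mathcal{X}$. Moreover
\begin{equation*}
\|B_m P_n\eta-P_n\eta\|=\|P_n(A_m P_n\eta-P_n\eta)\|\le \|A_m\eta-\eta\|+2\|(P-P_n)\eta\|.
\end{equation*}
The last term does not tend to zero in $m$ for fixed $n$, so a diagonal argument is needed. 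For each $n$, Lemma \ref{lemma1} gives that $P_n\eta$ is within distance $2\|(P-P_n)\eta\|$ of $E_n\mathcal{H}$. To see this, weak limits of the $B_m$ produce, as in the proof of (ii)$\Rightarrow$(i) of Lemma \ref{lemma1}, an operator $0\le B\le P_n$ commuting with $\mathcal{X}$. Its range projection lies under $E_n$, and $\|BP_n\eta-P_n\eta\|\le 2\|(P-P_n)\eta\|$. Since $B$ has range inside $E_n\mathcal{H}$, this gives $\operatorname{dist}(P_n\eta,E_n\mathcal{H})\le2\|(P-P_n)\eta\|$.

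Letting $n\to\infty$, we have $P_n\eta\to P\eta=\eta$ and $\|(P-P_n)\eta\|\to0$. Hence $\operatorname{dist}(\eta,E_n\mathcal{H})\to0$, so $\|(I-E_n)\eta\|\to0$ and therefore $(I-F)\eta=0$. This gives $E\le F$, and with Step 1, $F=E$, proving the lemma. The obstacle is exactly the range-of-$B$ claim in Step 2. The proof of Lemma \ref{lemma1} already shows that the range projection of such a weak limit commutes with $\mathcal{X}$ and is $\mathcal{J}$-well behaved, via Lemma 1.1 of \cite{voiculescu1981some}. We would reuse that argument verbatim on $P_n\mathcal{H}$, using the hypothesis $\mathcal{X}=\mathcal{X}^*$ so that the range projection of $B$ lies in $(\mathcal{X})'$.
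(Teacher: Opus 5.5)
Your proof is correct, but Step 2 follows a different route from the paper's; Step 1 is the paper's inclusion $E_{\mathcal{J}}^{(0)}(\mathcal{X}|_{P_n\mathcal{H}})\mathcal{H}\subset E_{\mathcal{J}}^{(0)}(\mathcal{X}|_{P\mathcal{H}})\mathcal{H}\cap P_n\mathcal{H}$, obtained by extension by zero.

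For the reverse direction, the paper first reduces to $P=I$. It then uses that $E=E_{\mathcal{J}}^{(0)}(\mathcal{X})$ is central in $(\mathcal{X})'$ (Theorem 1.2 of \cite{voiculescu1981some}), so $E$ commutes with $P_n$. Consequently, for $\eta\in E\mathcal{H}$ the vector $\xi=P_n\eta$ itself lies in $E\mathcal{H}\cap P_n\mathcal{H}$. Compressing its Lemma~\ref{lemma1} witnesses gives $\|P_nA_mP_n\xi-\xi\|\le\|A_m\xi-\xi\|\to 0$ with no error term, so Lemma~\ref{lemma1} applies as stated. This yields the exact identity $E_{\mathcal{J}}^{(0)}(\mathcal{X}|_{P_n\mathcal{H}})=P_nE$, and $P_nE\uparrow E$ is then immediate.

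You never use that $E$ commutes with $P_n$. Compressing a general $\eta\in E\mathcal{H}$ leaves the residual $2\|(P-P_n)\eta\|$, so you cannot apply Lemma~\ref{lemma1} as stated and must reopen its proof. That reuse is legitimate. The (ii)$\Rightarrow$(i) argument only needs $|[B_m,X]|_{\mathcal{J}}\to 0$ to conclude, via Lemma 1.1 of \cite{voiculescu1981some}, that the range projection of any weak limit point lies under $E_n$. Weak lower semicontinuity of the norm then gives $\operatorname{dist}(P_n\eta,E_n\mathcal{H})\le 2\|(P-P_n)\eta\|$, and the $n\to\infty$ limit finishes the argument.

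The paper's route is shorter and gives the sharper structural statement $E_n=P_nE$, i.e.\ $E_n\mathcal{H}=E\mathcal{H}\cap P_n\mathcal{H}$, at the cost of invoking centrality of the singular projection. Yours establishes only the monotone convergence $E_n\uparrow E$. In exchange it does not need centrality, relying instead on a second use of the weak-limit and range-projection machinery behind Lemma~\ref{lemma1}.
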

\begin{proof}
    Replacing $\mathcal{X}$ by $\mathcal{X}|_{P \mathcal{H}}$ reduces the proof to the case when $P=I$. Since $E_{\mathcal{J}}^{(0)}(\mathcal{X})$ is in the center of $(\mathcal{X})'$, we have that 
    \begin{equation*}
        P_n E_{\mathcal{J}}^{(0)}(\mathcal{X})\mathcal{H}=E_{\mathcal{J}}^{(0)}(\mathcal{X})P_n \mathcal{H} \subset E_{\mathcal{J}}^{(0)}(\mathcal{X}) \mathcal{H} \text{ and } P_n E_{\mathcal{J}}^{(0)}(\mathcal{X}) \uparrow E_{\mathcal{J}}^{(0)} (\mathcal{X}) \ \ (P_nE_{\mathcal{J}}(\mathcal{X}) \text{ is a projection}). 
    \end{equation*}
    On the other hand, 
    \begin{equation*}
        P_n E_{\mathcal{J}}^{(0)}(\mathcal{X}) \mathcal{H} = E_{\mathcal{J}}^{(0)}(\mathcal{X}|_{P_n \mathcal{H}})P_n \mathcal{H}.
    \end{equation*}
     Indeed, using Lemma \ref{lemma1} we easily see that 
     \begin{equation*}
         E_{\mathcal{J}}^{(0)}(\mathcal{X}|_{P_n \mathcal{H}})P_n \mathcal{H} \subset E_{\mathcal{J}}^{(0)}(\mathcal{X}) \mathcal{H} \ \cap \ P_n \mathcal{H} = P_n E_{\mathcal{J}}^{(0)}(\mathcal{X}) \mathcal{H}.
     \end{equation*}
    Therefore, we concentrate on the reverse inclusion. 
    \\
    If $\xi \in E_{\mathcal{J}}^{(0)}(\mathcal{X}) \mathcal{H} \ \cap \ P_n \mathcal{H}$, then Lemma $\ref{lemma1}$ implies there exist $A_m \in \mathcal{R}_1^+(\mathcal{H})$ so that $|[A_m, X]|_{\mathcal{J}} \to 0$ for all $X \in \mathcal{X}$ and $||A_m \xi - \xi|| \to 0$. This also holds for $A_m$ replaced by $P_n A_m P_n$ and $\mathcal{X}$ replaced by $\mathcal{X}|_{P_n \mathcal{H}}$ since 
    \begin{equation*}
        P_n [A_m,X] P_n = [P_nA_mP_n, X] \text{ and } ||P_nA_mP_n \xi- \xi|| = ||P_nA_mP_n \xi - P_n \xi|| \leq ||A_m \xi - \xi|| \to 0.
    \end{equation*}
    Thus $\xi \in E_{\mathcal{J}}^{(0)}(\mathcal{X}|_{P_n \mathcal{H}})P_n \mathcal{H}$. In particular, if $\xi \in E_{\mathcal{J}}^{(0)}(\mathcal{X}) \mathcal{H}$, then $P_n \xi \in E_{\mathcal{J}}^{(0)}(\mathcal{X}|_{P_n \mathcal{H}})$ and $||P_n \xi -\xi|| \to 0$. 
\end{proof}
\begin{lemma} \label{lemma4}
    Let $\mathcal{A, B,} \rho_1$ be the C*-algebra, *-subalgebra, and *-representation respectively defined in this section. Then we have 
    \begin{equation*}
        E_{\mathcal{C}_n^-}(\rho_1(\mathcal{B})) = E_{ac}(X_1, \ldots, X_n).
    \end{equation*}
\end{lemma}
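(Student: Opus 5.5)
We will prove the equivalent statement $E^{(0)}_{\mathcal{C}_n^-}(\rho_1(\mathcal{B})) = E_s(X_1,\ldots,X_n)$, where $E_s = I - E_{ac}$ is the Lebesgue singular projection. The idea is to reduce to bounded commuting self-adjoint $n$-tuples, for which the identity $E^{(0)}_{\mathcal{C}_n^-}(\tau)=E_s(\tau)$ is known from \cite{voiculescu1979some}. The reduction uses spectral truncations $P_N = \chi_{[-N,N]^n}(X_1,\ldots,X_n)$, which lie in the commutant of $\rho_1(\mathcal{B})$ and increase to $I$. By Lemma \ref{lemma3} it suffices to prove, for each $N$,
\begin{equation*}
E^{(0)}_{\mathcal{C}_n^-}(\rho_1(\mathcal{B})|_{P_N\mathcal{H}}) = E_s(X_1,\ldots,X_n)P_N .
\end{equation*}
This suffices because $E_s(X)P_N \uparrow E_s(X)$ as $N\to\infty$.

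Fix $N$ and write $\tau_N = (X_1P_N,\ldots,X_nP_N)|_{P_N\mathcal{H}}$. This is a bounded commuting self-adjoint $n$-tuple whose Lebesgue singular projection is $E_s(X)P_N$. We compare the vector space $\rho_1(\mathcal{B})|_{P_N\mathcal{H}}$ with $\mathbb{C}[\tau_N]$.

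\textbf{Step 1.} Each generator of $\rho_1(\mathcal{B})|_{P_N\mathcal{H}}$ comes from $\tau_N$ by analytic functional calculus. Indeed, $r_j(X)|_{P_N\mathcal{H}} = (X_jP_N - i)^{-1}$ is a resolvent of $X_jP_N$ on $P_N\mathcal{H}$, and $u_k(X)|_{P_N\mathcal{H}} = \exp(i\sum_j \xi^{(k)}_j X_jP_N)$ is an entire function of the bounded commuting tuple $\tau_N$. Products of these generators are handled by the same remark.

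\textbf{Step 2.} Conversely, $X_jP_N$ is obtained from $r_j(X)|_{P_N\mathcal{H}}$ by an analytic function, namely $t \mapsto t^{-1}+i$ applied to $(X_jP_N-i)^{-1}$. This function is analytic near the spectrum of $(X_jP_N-i)^{-1}$, since that spectrum lies in $\{(t-i)^{-1}: |t|\le N\}$, which is compact and avoids $0$.

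Using the remark in Section 2 that $E^{(0)}_{\mathcal{J}}$ is unchanged when a $\mathcal{J}$-well behaved space is enlarged by finitely many analytic functional calculus elements, we get
\begin{equation*}
E^{(0)}(\rho_1(\mathcal{B})|_{P_N}) = E^{(0)}(\rho_1(\mathcal{B})|_{P_N} + \mathbb{C}[\tau_N]) = E^{(0)}(\mathbb{C}[\tau_N]) = E^{(0)}(\tau_N) = E_s(\tau_N).
\end{equation*}
The remark is phrased for whole well-behaved spaces, so it must be localized: on any reducing subspace where one finite set has quasicentral modulus zero, the enlarged finite set also has modulus zero. That is exactly what the contour-integral estimate in Section 2 gives. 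Since $\rho_1(\mathcal{B})$ has countably many generators, we apply the remark to each finite subset and then take the infimum over finite subsets.

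The main obstacle is carrying out this comparison rigorously at the level of $E^{(0)}$, which is defined as a meet over finite subsets, rather than just for well-behaved spaces. For the inclusion of $E^{(0)}(\tau_N)$ in $E^{(0)}(\rho_1(\mathcal{B})|_{P_N})$, the cleanest route is Lemma \ref{lemma1}. Take $\xi$ in the range of $E^{(0)}(\tau_N)$ and $A_m$ with $|[A_m, X_jP_N]|\to 0$ and $A_m\xi\to\xi$. The contour estimate and the Duhamel-type formula from Lemma \ref{lemma2}(b) give $|[A_m, T]|\to 0$ for every $T$ in $\rho_1(\mathcal{B})|_{P_N}$. The constants grow with $|\xi^{(k)}|$, but they are fixed for each generator, and that is all that is needed. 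The reverse inclusion follows the same way, using Step 2 to control $[A_m, X_jP_N]$ by $[A_m, r_j(X)]$. Finally, we use $E_{ac}(\tau_N)=E_{ac}(X)P_N$ and pass to the limit with Lemma \ref{lemma3}.
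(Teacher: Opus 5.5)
Your proposal is correct and follows essentially the same route as the paper: truncate by the spectral projections $P_N$ of $[-N,N]^n$, identify $E^{(0)}_{\mathcal{C}_n^-}(\rho_1(\mathcal{B})|_{P_N\mathcal{H}})$ with $E_s(X_1,\ldots,X_n)P_N$ using the bounded-case result of \cite{voiculescu1979some} and invariance under analytic functional calculus, and conclude with Lemma \ref{lemma3}. Your Step 2 (recovering $X_jP_N$ from $(X_jP_N-i)^{-1}$) and your use of Lemma \ref{lemma1} to treat all countably many $u_k$ at once make explicit two points the paper handles only briefly: that the truncated $X_j$ can be dropped from the enlarged family without changing the singular projection, and that infinitely many generators are involved.
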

\begin{proof}
    We will use the preceding lemma with $\mathcal{X} = \rho_1(\mathcal{B})$, $\mathcal{J}= \mathcal{C}_n^-$, and the spectral projection $P_m$ of $(X_1, \dots, X_n)$ for $[-m,m]^n \subset \mathbb{R}^n$. Since the $X_j$ restricted to $P_m \mathcal{H}$ are bounded operators for all $1 \leq j \leq n$, by the results of $\cite{voiculescu1979some}$, we have that 
    \begin{equation*}
        E_{\mathcal{C}_n^-}^{(0)}(X_1|_{P_m \mathcal{H}}, \ldots, X_n|_{P_m \mathcal{H}})= E_s(X_1|_{P_m \mathcal{H}}, \ldots, X_n|_{P_m \mathcal{H}})= E_s(X_1, \ldots, X_n)P_m \mathcal{H}.
    \end{equation*}
    On the other hand, the $u_k |_{P_m \mathcal{H}}$ and $(X_j-iI)^{-1}|_{P_m \mathcal{H}}$ are obtained by analytic functional calculus applied to $(X_1|_{P_m \mathcal{H}}, \ldots, X_n|_{P_m \mathcal{H}})$ so that 
    \begin{equation*}
        E^{(0)}_{\mathcal{C}_n^-}(X_1|_{P_m \mathcal{H}}, \ldots, X_n|_{P_m \mathcal{H}})=E^{(0)}_{\mathcal{C}_n^-}(X_1|_{P_m \mathcal{H}}, \ldots, X_n|_{P_m \mathcal{H}}, u_1|_{P_m \mathcal{H}}, \ldots, u_N|_{P_m \mathcal{H}}, (X_1-iI)^{-1}|_{P_m \mathcal{H}}, \ldots , (X_n-iI)^{-1}|_{P_m \mathcal{H}})).
    \end{equation*}
    Using the fact that extending this with polynomials in these commuting operators does not change the $\mathcal{C}_n^-$-singular subspace shows that this also equals $E_{\mathcal{C}_n^-}^{(0)}(\rho_1(\mathcal{B})|_{P_m \mathcal{H}})$. This means 
    that 
    \begin{equation*}
        E_{\mathcal{C}_n^-}^{(0)}(\rho_1(\mathcal{B})|_{P_m \mathcal{H}})= E_s(X_1, \ldots , X_n)P_m \mathcal{H}
    \end{equation*}
    and using Lemma $\ref{lemma3}$ we get that 
    \begin{equation*}
        E_{\mathcal{C}_n^-}^{(0)}(\rho_1(\mathcal{B}))= E_s(X_1, \ldots, X_n).
    \end{equation*}
    Passing to the orthocomplements gives the desired conclusion.
\end{proof}
\noindent
\begin{center}
    \underline{We now summarize the proof of Theorem $\ref{thm1}$:}
\end{center}
We apply Theorem $\ref{thm2}$ to $\mathcal{A, B}, \rho_1, \rho_2, u_k$ introduced in this section. Since $n \ge 3$ in Theorem $\ref{thm1}$, the condition $p>2$ for $\mathcal{C}_p^-$ in Theorem $\ref{thm2}$ is satisfied. Also, $\rho_1(b)-\rho_2(b) \in \mathcal{C}_n^-$ for all $b \in \mathcal{B}$ is a consequence of Lemma $\ref{lemma2}$. The spaces $E_{\mathcal{C}^-_n}(\rho_1)$ and $E_{ac}(X_1, \ldots, X_n)$ have been identified in Lemma $\ref{lemma4}$. The fact that $w-\lim_{n \to \infty} \rho_1(u_n)E_{\mathcal{C}_n^-}(\rho_1(\mathcal{B}))=0$ follows from the well-known fact that $\exp(i \sum_{1 \leq j \leq n)} \xi_j^{(k)}X_j)E_{ac}(X_1 , \ldots , X_n ) \xrightarrow{w} 0$ as $|\xi^{(k)}| \to \infty$ when $k \to \infty$. 

\section{Additional Results}
If $n=2$, we have the following very weak result:
\begin{theorem} \label{thm3}
    Let $(X_1, X_2)$, $(Y_1, Y_2)$ be two pairs of possibly unbounded commuting self-adjoint operators such that $X_j-Y_j \in \mathcal{C}_2^-$ for all $1 \leq j \leq 2$. Assume both pairs have purely Lebesgue absolutely continuous joint spectral measures, that is $E_s(X_1, X_2)=E_s(Y_1, Y_2)=0$. Then, for any sequence $(\eta^{(m)})_{m \in \N}$ in $\R^2$ such that $|\eta^{(m)}| \to \infty$ as $m \to \infty$, there is a subsequence $(\xi^{(k)})_{k \in \N}$ such that 
    \begin{equation*}
        W = w-\lim_{k \to \infty} \exp(-i \sum_{j=1}^2 \xi_j^{(k)} Y_j)\exp(i \sum_{j=1}^2 \xi_j^{(k)} X_j)
    \end{equation*}
    exists and Ker $W = $ Ker $W^* = \{0\}$ and $WX_j =Y_j W$ for $j = 1,2$. 
\end{theorem}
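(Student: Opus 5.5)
The plan is to mimic Section 3, but with a weak-limit substitute for Theorem \ref{thm2} in the case $p=2$. Fix the sequence $(\eta^{(m)})$ and set $U_m=\exp(i\sum_j \eta_j^{(m)}X_j)$ and $V_m=\exp(i\sum_j\eta_j^{(m)}Y_j)$. The operators $V_m^*U_m$ are contractions. By weak compactness of the unit ball of $B(\mathcal{H})$ and separability of $\mathcal{H}$, we can pass to a subsequence $\xi^{(k)}$ along which $V_k^*U_k \to W$ weakly and, by the same argument applied to the adjoints, $U_k^*V_k\to W^*$ weakly.

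\textbf{Intertwining.} Rather than work with the unbounded $X_j,Y_j$ directly, it suffices to show $W\rho_1(f)=\rho_2(f)W$ for $f=r_j$, that is $W(X_j-i)^{-1}=(Y_j-i)^{-1}W$; this implies $WX_j\subset Y_jW$ in the usual sense. Write $R_X=(X_j-i)^{-1}$ and $R_Y=(Y_j-i)^{-1}$. Since $R_X$ commutes with $U_k$ and $R_Y$ commutes with $V_k$,
\begin{equation*}
V_k^*U_kR_X-R_YV_k^*U_k=V_k^*(R_X-R_Y)U_k .
\end{equation*}
By Lemma \ref{lemma2}(a), $K=R_X-R_Y\in\mathcal{C}_2^-$, so $K$ is compact. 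The absolute continuity hypothesis now enters. Because $E_{ac}(X)=I$, we have $U_k\to0$ weakly; for this we use the Riemann--Lebesgue lemma applied to the scalar measures $\langle E_X(\cdot)h,h\rangle$, which are absolutely continuous. Compactness of $K$ upgrades this to $KU_k\to0$ strongly, so $V_k^*KU_k\to0$ in norm on vectors. Passing to the weak limit gives the intertwining relation $WR_X=R_YW$. The same argument yields $W^*R_Y=R_XW^*$.

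\textbf{Triviality of the kernels (the main obstacle).} We must show $\operatorname{Ker}W=\operatorname{Ker}W^*=\{0\}$. Weak limits can easily lose mass, and this is exactly where $p=2$ fails to give strong convergence. My approach is to exploit the quasicentral modulus. By \cite{voiculescu1979some} Theorem 4.5, $k_{\mathcal{C}_2^-}$ of a commuting pair with absolutely continuous spectral measure is, up to a universal constant, $\big(\int m(x)\,dx\big)^{1/2}$, where $m$ is the multiplicity function. In particular it is nonzero on every nonzero reducing subspace whose spectral measure is absolutely continuous. Moreover it is invariant under $\mathcal{C}_2^-$-perturbations (after localizing to bounded pieces via the spectral projections $P_m$, as in Lemma \ref{lemma4}, and passing to bounded functions of the operators).

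The intertwining shows that $\operatorname{Ker}W$ reduces $(X_1,X_2)$ and that $\overline{\operatorname{Ran}W}$ reduces $(Y_1,Y_2)$. I would then try to show that $W$ restricted to $(\operatorname{Ker}W)^\perp$, after polar decomposition, gives a partial isometry $V$ with $VX_j=Y_jV$. This makes $X|_{(\operatorname{Ker} W)^\perp}$ unitarily equivalent to $Y|_{\overline{\operatorname{Ran} W}}$, so the multiplicity functions satisfy
\begin{equation*}
m_X=m_{X|\operatorname{Ker}W}+m_{Y|\overline{\operatorname{Ran}W}} .
\end{equation*}
We also have $m_X=m_Y$, because the quasicentral modulus on each localized piece $P_m$ agrees for $X$ and $Y$. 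Comparing these would force $\operatorname{Ker}W$ and $\operatorname{Ker}W^*$ to have matching multiplicities, but not necessarily to be zero.

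This is the weakest point of the plan. To close the gap, I would need an additional input, namely a lower bound $\liminf_k\|P V_k^*U_k h\|\ge c\|h\|$ that survives weak limits. I would try to extract such a bound from the $\mathcal{C}_2^-$ quasicentral approximate units $A_m$ for $X$, using that $[A_m,U_k]$ is small uniformly in $k$. If that fails, the fallback is to refine the subsequence diagonally so that the limit is injective on a dense set built from spectral pieces of $X$.
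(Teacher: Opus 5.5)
\textbf{There is a genuine gap: you do not prove $\operatorname{Ker}W=\operatorname{Ker}W^*=\{0\}$, which is the actual content of the theorem.}

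What you do prove is correct. The subsequence exists by weak compactness. Your intertwining argument is also correct, and it is more elementary than the paper's route to that part: it uses only compactness of $(X_j-i)^{-1}-(Y_j-i)^{-1}$ and $U_k\to 0$ weakly.

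The strategy you sketch for the kernels cannot succeed, as you partly acknowledge.
\begin{itemize}
\item Multiplicity and quasicentral-modulus data see only the unitary equivalence classes of $(X_1,X_2)$ and $(Y_1,Y_2)$. Such data can never force a \emph{particular} intertwiner to be injective, since $W=0$ is also an intertwiner. At best you get $m_{X|\operatorname{Ker}W}=m_{Y|\operatorname{Ker}W^*}$, which is compatible with both kernels being large.
\item Even that bookkeeping is unsound for unbounded pairs. The multiplicity function is typically non-integrable (e.g.\ the coordinate multiplications on $L^2(\R^2)$), and it can be infinite on sets of positive measure, so the subtraction is illegitimate.
\item Localizing with the spectral projections $P_m$ of $X$ does not allow a comparison with $Y$, because $P_m$ does not reduce $Y$.
\item The diagonal-subsequence fallback does not work either. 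Injectivity on a dense set does not imply injectivity. Moreover, the conclusion holds for \emph{every} subsequence along which the weak limit exists, so no clever choice of subsequence is involved.
\end{itemize}

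The missing ingredient is Theorem~1.4 of \cite{voiculescu1981some} (Theorem~\ref{thm4} above). It states exactly that such weak limits have trivial kernel and cokernel when $p>1$ and both $\mathcal{C}_p^-$-singular projections vanish. The paper's proof consists of checking its hypotheses for the objects $\mathcal{A},\mathcal{B},\rho_1,\rho_2,u_k$ of Section~3, with $p=2$:
\begin{itemize}
\item $\rho_1(b)-\rho_2(b)\in\mathcal{C}_2^-$ for $b\in\mathcal{B}$, by Lemma~\ref{lemma2} and the ideal property.
\item $E^{(0)}_{\mathcal{C}_2^-}(\rho_1(\mathcal{B}))=E_s(X_1,X_2)=0$, and likewise for $\rho_2$, by the argument of Lemma~\ref{lemma4}, which works for $n=2$.
\item $\rho_1(u_k),\rho_2(u_k)\to 0$ weakly, by Riemann--Lebesgue applied to both pairs.
\end{itemize}

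Note that the hypothesis $E_s(Y_1,Y_2)=0$ is indispensable for $\operatorname{Ker}W^*=\{0\}$. Since $W^*$ intertwines $Y$ with the purely absolutely continuous $X$, it annihilates $E_s(Y)\mathcal{H}$. In your sketch this hypothesis enters only through the inconclusive multiplicity comparison. Unless you invoke Theorem~\ref{thm4} or reproduce its argument, the proof is incomplete at its central step.
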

The theorem is derived from a theorem about representations of C*-algebra $\mathcal{A}$ with its dense *-subalgebra $\mathcal{B}$ with countable basis as a vector space:
\begin{theorem}\label{thm4}(\cite{voiculescu1981some}, Theorem 1.4)
    Assume $p>1$ and let $\rho_1, \rho_2$ be non-degenerate *-representations of $\mathcal{A}$ on $\mathcal{H}$ so that $E_{C_p^-}^{(0)}(\rho_1)=E_{C_p^-}^{(0)}(\rho_2)=0$ and $\rho_1(b)-\rho_2(b) \in \mathcal{C}_p^-$ for all $b \in \mathcal{B}$. Assume there exist unitary operators $u_n \in \mathcal{B} \ \cap \ Z(\mathcal{A})$ such that $w-\lim_{n \to \infty} \rho_1(u_n)= w-\lim_{n \to \infty} \rho_2(u_n)=0$ and $\rho_2(u_n^*)\rho_1(u_n) \xrightarrow{w} W$. Then, Ker $W = $ Ker $W^* = \{0\}$ and $\rho_2(b)W=W\rho_1(b)$ for all $b \in \mathcal{B}$. 
\end{theorem}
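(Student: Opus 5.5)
The plan has two parts: the intertwining relation, which is essentially algebraic, and the triviality of the kernels, which is where the hypothesis $E^{(0)}_{\mathcal{C}_p^-}(\rho_1)=E^{(0)}_{\mathcal{C}_p^-}(\rho_2)=0$ has to be used.

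\emph{Intertwining.} Fix $b \in \mathcal{B}$ and put $K=\rho_1(b)-\rho_2(b) \in \mathcal{C}_p^-$. Since $u_n$ is central in $\mathcal{A}$, $\rho_j(u_n)$ commutes with $\rho_j(b)$, and therefore
\begin{equation*}
\rho_2(u_n^*)\rho_1(u_n)\rho_1(b)-\rho_2(b)\rho_2(u_n^*)\rho_1(u_n)=\rho_2(u_n^*)K\rho_1(u_n).
\end{equation*}
Because $K$ is compact and $\rho_1(u_n)\to 0$ weakly, $K\rho_1(u_n)\to 0$ strongly. Since $\rho_2(u_n^*)$ is unitary, the right-hand side tends to $0$ strongly. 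Passing to weak limits on the left, which is legitimate because multiplication by a fixed bounded operator is weakly continuous, gives $W\rho_1(b)=\rho_2(b)W$. Taking adjoints, and using that $\mathcal{B}$ is a $*$-algebra, gives $W^*\rho_2(b)=\rho_1(b)W^*$. The situation is symmetric in $(\rho_1,\rho_2)$, with $W^*$ the weak limit of $\rho_1(u_n^*)\rho_2(u_n)$. So once the intertwining is known, it suffices to prove $\operatorname{Ker} W=\{0\}$.

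\emph{Reducing subspaces.} The intertwining makes $\operatorname{Ker} W$ invariant under $\rho_1(\mathcal{B})$, hence reducing for $\rho_1(\mathcal{A})$. Let $Q\neq 0$ be its projection, so $Q \in \rho_1(\mathcal{A})'$. Similarly $\operatorname{Ker} W^*$ reduces $\rho_2(\mathcal{A})$ with projection $Q'$. The goal is a contradiction with $E^{(0)}_{\mathcal{C}_p^-}(\rho_1)=0$ restricted to $Q\mathcal{H}$, which gives $k_{\mathcal{C}_p^-}(\rho_1(\beta)|_{Q\mathcal{H}})\neq 0$ for some finite $\beta$.

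\emph{Main obstacle: $Q=0$.} I would exploit the hypothesis $p>1$ through the quasicentral modulus, as follows.
\begin{enumerate}
\item[(1)] Pick $A_m \in \mathcal{R}_1^+$ with $A_m\uparrow I$ and $|[A_m,\rho_2(b)]|_{\mathcal{C}_p^-}$ bounded for $b$ in a finite set $\beta$.
\item[(2)] Since $\rho_1(b)-\rho_2(b)\in\mathcal{C}_p^-$, the commutators with $\rho_1(b)$ are also bounded in $\mathcal{C}_p^-$, up to the compact-perturbation terms $[A_m,K]$.
\item[(3)] Insert $A_m$ into the relation $Q\rho_1(u_n^*)\rho_2(u_n)(I-Q')\to$ (weakly) $QW^*(I-Q')$, and estimate traces of the form $\operatorname{Tr}\bigl(A_m[\rho_1(u_n),\cdot\,]\bigr)$ using the $\mathcal{C}_p^-$–$\mathcal{C}_{p'}$ duality.
\item[(4)] Aim for an inequality bounding $k_{\mathcal{C}_p^-}(\rho_1(\beta)|_{Q\mathcal{H}})$ by a quantity that vanishes as $n\to\infty$, because $\rho_j(u_n)\to 0$ weakly and $WQ=0$.
\end{enumerate}
The expected mechanism is this: if $W$ kills $Q\mathcal{H}$, then asymptotically $\rho_1|_{Q\mathcal{H}}$ decouples from $\rho_2$ modulo $\mathcal{C}_p^-$. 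That decoupling should produce a quasicentral approximate unit for $\rho_1(\beta)|_{Q\mathcal{H}}$, making $Q\mathcal{H}$ $\mathcal{C}_p^-$-well behaved and so contradicting $E^{(0)}_{\mathcal{C}_p^-}(\rho_1)=0$ unless $Q=0$.

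Making step (4) rigorous is the hard part, since weak convergence gives no norm control. I expect to need the structure result (Theorem 1.2 of \cite{voiculescu1981some}) that $E^{(0)}$ is central, together with an averaging over $n$ to upgrade weak convergence to the required estimates. Once $Q=0$ is established, the symmetric argument gives $Q'=0$, that is $\operatorname{Ker} W^*=\{0\}$.
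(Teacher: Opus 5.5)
The intertwining part of your proposal is correct. Centrality of $u_n$ gives $V_n\rho_1(b)-\rho_2(b)V_n=\rho_2(u_n^*)K_b\rho_1(u_n)$ for $V_n=\rho_2(u_n^*)\rho_1(u_n)$, and the right side tends to $0$ strongly. Your reductions are also sound: the symmetry $W\leftrightarrow W^*$, the fact that $\operatorname{Ker}W$ reduces $\rho_1(\mathcal{A})$, and the observation that $Q\neq 0$ contradicts $E^{(0)}_{\mathcal{C}_p^-}(\rho_1)=0$ once $\rho_1(\mathcal{B})|_{Q\mathcal{H}}$ is shown to be $\mathcal{C}_p^-$-well behaved. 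Note that the paper gives no proof of this statement; it imports it from \cite{voiculescu1981some}. Still, $\operatorname{Ker}W=\operatorname{Ker}W^*=\{0\}$ is the real content of the theorem, and the only place where $E^{(0)}_{\mathcal{C}_p^-}(\rho_j)=0$ and $p>1$ are needed. Your proposal has no argument for it. Steps (1)--(4) are a list of intentions, and you say yourself that step (4) is not done. So there is a genuine gap.

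Two concrete problems with the plan. First, step (1) presupposes $k_{\mathcal{C}_p^-}(\rho_2(\beta))<\infty$, which is not a hypothesis. The condition $E^{(0)}_{\mathcal{C}_p^-}(\rho_2)=0$ asserts that approximate units with vanishing commutators do not exist; it says nothing about existence of ones with bounded commutators.

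Second, the ``decoupling'' heuristic points the wrong way. The natural transport of approximate units from $\rho_2$ to $\rho_1$ is $A\mapsto V_n^*AV_n$. For fixed finite-rank $A\ge 0$ one does get $|[V_n^*AV_n,\rho_1(b)]|_{\mathcal{C}_p^-}\le|[A,\rho_2(b)]|_{\mathcal{C}_p^-}+o(1)$ as $n\to\infty$. This step needs $\rho_2(u_n)\to 0$ weakly, so that $\rho_2(u_n^*)K_b\rho_1(u_n)\to 0$ $*$-strongly. But in exactly this regime, $\|V_n^*AV_n\xi\|\le\|A\|^{1/2}\|A^{1/2}V_n\xi\|\to 0$ for every $\xi\in\operatorname{Ker}W$, because $V_n\xi\to W\xi=0$ weakly. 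So anything transported from $\rho_2$ vanishes on $Q\mathcal{H}$, and $WQ=0$ by itself yields no quasicentral approximate unit there.

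What is missing is a construction that, for $\xi\in\operatorname{Ker}W$, produces $A_m\in\mathcal{R}_1^+$ with $\|A_m\xi-\xi\|\to 0$ and $|[A_m,\rho_1(b)]|_{\mathcal{C}_p^-}\to 0$ for all $b\in\mathcal{B}$. Lemma 1 would then give $\xi=0$. Nothing in your sketch supplies this, or indicates where $p>1$ enters. A minor point: the dual of $\mathcal{C}_p^-$ is $\mathcal{C}_{p'}^+$, not $\mathcal{C}_{p'}$. As it stands, the proposal proves only the intertwining relation.
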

The idea of deriving Theorem $\ref{thm3}$ using Theorem $\ref{thm4}$ is along the same lines as deriving Theorem $\ref{thm1}$ using Theorem $\ref{thm2}$. In addition, the existence of a subsequence $\xi^{(k)}$ for which the weak limit exists is a consequence of the Banach-Alaoglu compactness theorem.
\par
To conclude this section, let us also note two corollaries of Theorem $\ref{thm1}$ (for the bounded operator case, see Corollary 1.6 and Corollary 1.7 in $\cite{voiculescu1981some}$). 
\begin{corollary} \label{cor1}
    Let $(X_1, \ldots, X_n)$ be an n-tuple of possibly unbounded commuting self-adjoint operators acting on a Hilbert space $\mathcal{H}$ where $n \ge 3$. Suppose $T \in B(\mathcal{H})$ such that $[T, X_j] \in \mathcal{C}_n^-$ for all $ 1 \leq j \leq n$. Then, for any sequence $\xi^{(k)} \in \R^n$ such that $|\xi^{(k)}| \to \infty$, the following strong limit exists:
    \begin{equation*}
        s-\lim_{k \to \infty} \exp\left(-i \sum_{j=1}^n \xi_j^{(k)} X_j\right) T \exp\left(i \sum_{j=1}^n \xi_j^{(k)} X_j\right) E_{ac}(X_1, \ldots, X_n)
    \end{equation*}
\end{corollary}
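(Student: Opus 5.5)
The plan is to reduce Corollary \ref{cor1} to Theorem \ref{thm1} with the usual $2\times 2$ matrix trick, as in the bounded case (Corollary 1.6 of \cite{voiculescu1981some}). Since $\exp(-i\sum \xi_j X_j) T \exp(i\sum\xi_j X_j)$ is linear in $T$, and the commutator hypothesis is preserved under $T \mapsto T^*$ (because $[T^*,X_j] = -[T,X_j]^*$ on the appropriate domain, and $\mathcal{C}_n^-$ is $*$-invariant), it suffices to handle $\mathrm{Re}\,T$ and $\mathrm{Im}\,T$ separately. Adding a multiple of $I$ changes nothing, so I may assume $T = T^*$ and $\|T\| < 1$. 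Then $U = \exp(iT)$ is unitary, and the argument of Lemma \ref{lemma2}(b) gives
\[
[U,X_j] = i\int_0^1 \exp(itT)[T,X_j]\exp(i(1-t)T)\,dt \in \mathcal{C}_n^- .
\]
Alternatively, one can work directly on $\mathcal{H}\oplus\mathcal{H}$ with the self-adjoint unitary
\[
V = \begin{pmatrix} \sqrt{1-T^2} & T\\ T & -\sqrt{1-T^2}\end{pmatrix};
\]
its commutators with $X_j\oplus X_j$ are in $\mathcal{C}_n^-$ provided $\sqrt{1-T^2}$ has $\mathcal{C}_n^-$ commutators. This holds by a resolvent-integral argument like the analytic functional calculus estimate in the Preliminaries, since $1-T^2$ is invertible.

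Take the unitary route. Set $Y_j = U^* X_j U$. This is a commuting $n$-tuple of self-adjoint operators with $\mathcal{D}(Y_j) = U^*\mathcal{D}(X_j)$, and $Y_j - X_j = U^*[X_j,U]$. The first technical step is to check that $[U,X_j]$ makes sense as a bounded operator in $\mathcal{C}_n^-$. Concretely, $U$ maps $\mathcal{D}(X_j)$ into itself and $UX_j - X_jU$ extends to an element of $\mathcal{C}_n^-$, so $Y_j = X_j + K_j$ with $K_j \in \mathcal{C}_n^-$. I read the hypothesis $[T,X_j]\in\mathcal{C}_n^-$ in the same sense, and transfer it to $U$ via the Duhamel formula above together with the invariance of $\mathcal{D}(X_j)$ under $\exp(itT)$. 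This domain bookkeeping is the point I expect to need the most care. If it causes trouble, I would instead pass to resolvents: the condition $[T,(X_j-iI)^{-1}]\in\mathcal{C}_n^-$ follows from $(X_j-i)^{-1}[X_j,T](X_j-i)^{-1}$.

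Now apply Theorem \ref{thm1} to the pair $(X_1,\dots,X_n)$, $(Y_1,\dots,Y_n)$. Using $\exp(-i\sum\xi_jY_j) = U^*\exp(-i\sum\xi_jX_j)U$, it gives strong convergence of
\[
U^* e^{-i\langle\xi^{(k)},X\rangle} U e^{i\langle\xi^{(k)},X\rangle} E_{ac}(X).
\]
Multiplying on the left by the fixed unitary $U$ preserves strong convergence. Hence
\[
e^{-i\langle\xi,X\rangle} U e^{i\langle\xi,X\rangle}E_{ac}(X)
\]
converges strongly, and the same holds for $sT$ in place of $T$ for every small real $s$.

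To recover $T$ itself from $\exp(isT)$, I would use the unitary $U_s=\exp(isT)$ and the identity $T = \frac{1}{2i s}(U_s - U_s^*) + O(s^2)$ in norm, which is uniform in conjugation by unitaries. Strong convergence of the conjugated $U_s$ holds for each $s$. The conjugated $U_s^*$ is handled by running the same argument with $-T$, which also satisfies the hypothesis. An $\varepsilon/3$ argument then gives strong convergence for $T$. Alternatively, an exact identity avoids the approximation: $T = \frac{1}{2i}(U - U^*)$ holds exactly for $T = \arcsin$-type functional calculus. Concretely, one replaces $T$ by $S$ with $\sin S = T$, where $S$ is obtained by the analytic functional calculus of $T$ (valid since $\|T\| < 1$), and $S$ inherits $\mathcal{C}_n^-$ commutators as in the Preliminaries. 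Then $T = \frac{1}{2i}(e^{iS}-e^{-iS})$ and linearity finishes the proof.
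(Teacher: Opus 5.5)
Your proposal is correct and follows the route the paper intends. The paper gives no separate proof and only points to the bounded-case Corollary 1.6 of \cite{voiculescu1981some}; your argument matches it: reduce to unitaries $U$ with $[U,X_j]\in\mathcal{C}_n^-$, apply Theorem \ref{thm1} to $X_j$ and $Y_j=U^*X_jU$, and recover $T$ as a combination or limit of such unitaries. Two small points: the reduction to $\|T\|<1$ comes from scaling by linearity, not from adding multiples of $I$; and the domain invariance of $\mathcal{D}(X_j)$ under $T^*$ and $e^{itT}$ that you flag does hold, via the adjoint identity and via the norm-convergent power series together with closedness of $X_j$.
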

\begin{corollary} \label{cor2}
    Let $(X_1, \ldots, X_n)$ and $(Y_1, \ldots, Y_n)$ be two n-tuples of possibly unbounded commuting self-adjoint operators acting on a Hilbert spaces $\mathcal{H}_1$ and $\mathcal{H}_2$ respectively where $n \ge 3$. Suppose $J$ is a bounded linear map between $\mathcal{H}_1$ and $\mathcal{H}_2$ such that $Y_jJ-JX_j \in \mathcal{C}_n^-$ for all $1 \leq j \leq n$. Then, for any sequence $\xi^{(k)} \in \R^n$ such that $|\xi^{(k)}| \to \infty$, the following strong limit exists:
    \begin{equation*}
        s-\lim_{k \to \infty} \exp\left(-i \sum_{j=1}^n \xi_j^{(k)} Y_j\right) J \exp\left(i \sum_{j=1}^n \xi_j^{(k)} X_j\right) E_{ac}(X_1, \ldots, X_n)
    \end{equation*}
\end{corollary}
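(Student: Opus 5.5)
The natural approach is the standard two-space trick: reduce Corollary \ref{cor2} to Theorem \ref{thm1} on the direct sum $\mathcal{H} = \mathcal{H}_1 \oplus \mathcal{H}_2$. On $\mathcal{H}$ I will take the commuting $n$-tuples
\begin{equation*}
Z_j = X_j \oplus Y_j \quad \text{and} \quad Z'_j = S^* Z_j S, \qquad 1 \le j \le n,
\end{equation*}
where $S$ is a unitary on $\mathcal{H}$ built from $J$. Direct sums of strongly commuting tuples are strongly commuting, so $(Z_1,\ldots,Z_n)$ is admissible. Because $J$ need not be unitary, I will not conjugate by it directly. Instead I follow the bounded case (Corollary 1.7 of \cite{voiculescu1981some}) and first derive Corollary \ref{cor1} for the tuple $(Z_j)$ and the bounded operator
\begin{equation*}
T = \begin{pmatrix} 0 & 0 \\ J & 0 \end{pmatrix}.
\end{equation*}
A formal computation gives $[T, Z_j] = \begin{pmatrix} 0 & 0 \\ JX_j - Y_jJ & 0 \end{pmatrix} \in \mathcal{C}_n^-$. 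Then $\exp(-i\sum_j \xi_j Z_j)\, T \exp(i\sum_j \xi_j Z_j)$ is exactly the $(2,1)$ entry $\exp(-i\sum_j \xi_j Y_j) J \exp(i\sum_j\xi_j X_j)$. Also $E_{ac}(Z) = E_{ac}(X)\oplus E_{ac}(Y)$, and restricting to vectors in $\mathcal{H}_1\oplus 0$ yields Corollary \ref{cor2} from Corollary \ref{cor1}.

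So the real work is Corollary \ref{cor1} for unbounded $X$ and bounded $T$ with $[T,X_j]\in\mathcal{C}_n^-$. I reduce it to Theorem \ref{thm1} by taking $U_t = \exp(itA)$ with $A = (T+T^*)/2$ or $A = (T-T^*)/(2i)$; since $T$ is a combination of two self-adjoint operators, linearity lets me treat self-adjoint $T$. Define $Y_j^{(t)} = U_t X_j U_t^*$, which is again a strongly commuting self-adjoint tuple. The perturbation is
\begin{equation*}
Y_j^{(t)} - X_j = \int_0^t U_s\, i[A,X_j]\, U_s^* \, ds \in \mathcal{C}_n^-,
\end{equation*}
justified on a core such as the vectors with bounded spectral support of $X$, and using that $[A,X_j]$ extends to an element of $\mathcal{C}_n^-$. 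Theorem \ref{thm1} then gives strong convergence of
\begin{equation*}
e^{-i\xi\cdot Y^{(t)}} e^{i\xi\cdot X}E_{ac}(X) = U_t\, e^{-i\xi\cdot X} U_t^* e^{i\xi\cdot X} E_{ac}(X).
\end{equation*}
Hence $e^{-i\xi\cdot X} e^{-itA} e^{i\xi\cdot X}E_{ac}(X)$ converges strongly for every real $t$.

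To recover $A$ itself, I differentiate at $t=0$. Write $A_\xi = e^{-i\xi\cdot X}Ae^{i\xi\cdot X}$, so the quantity above is $e^{-itA_\xi}E_{ac}(X)$. The difference quotient
\begin{equation*}
\frac{e^{-itA_\xi} - I}{-it}
\end{equation*}
is within $|t|\,\|A\|^2/2$ of $A_\xi$ in norm, uniformly in $\xi$. For each fixed $t$ these quotients converge strongly in $\xi$, so an $\varepsilon/3$ argument shows $A_\xi E_{ac}(X)$ is strongly Cauchy. This is valid for any sequence $\xi^{(k)}$ with $|\xi^{(k)}|\to\infty$.

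The main obstacle is the unbounded commutator bookkeeping. One must show that $[T,X_j]\in\mathcal{C}_n^-$, interpreted as $T$ preserving the domain with the commutator extending to an element of the ideal, implies the integral formula for $Y^{(t)}_j - X_j$ with the correct domains, and that $Y_j^{(t)}$ is self-adjoint with $\mathrm{dom}(Y^{(t)}_j) = U_t\,\mathrm{dom}(X_j)$. I would handle this by working with resolvents: show that $(X_j-i)^{-1}$ and $U_t$ have commutator in $\mathcal{C}_n^-$, and note that the conclusion of Lemma \ref{lemma2} only uses resolvent and exponential differences. Alternatively, the proof of Theorem \ref{thm1} really only needs $\rho_1(b)-\rho_2(b)\in\mathcal{C}_n^-$ for $b\in\mathcal{B}$, and that can be verified directly from the resolvent commutator identity. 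For the direct-sum step, the relation $[T,Z_j]\in\mathcal{C}_n^-$ must likewise be read as $J\,\mathrm{dom}(X_j)\subset \mathrm{dom}(Y_j)$ with $Y_jJ - JX_j$ bounded-extending into the ideal. That is the natural meaning of the hypothesis, and I will state it explicitly.
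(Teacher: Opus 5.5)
Your proposal is correct and follows the route the paper indicates. The paper writes out no proof; it only says the corollaries follow from Theorem~\ref{thm1} as in the bounded case (Corollaries 1.6--1.7 of \cite{voiculescu1981some}). Your argument does exactly that: the $2\times 2$ trick with $T=\begin{pmatrix}0&0\\ J&0\end{pmatrix}$ on $\mathcal{H}_1\oplus\mathcal{H}_2$ reduces Corollary~\ref{cor2} to Corollary~\ref{cor1}, and Corollary~\ref{cor1} comes from applying Theorem~\ref{thm1} to the pair $(X, U_tXU_t^*)$ and differentiating $e^{itA}$ at $t=0$. The domain issue you flag settles directly: $A\,\mathrm{dom}(X_j)\subset\mathrm{dom}(X_j)$ with $[A,X_j]$ bounded gives $\|X_jAh\|\le\|A\|\,\|X_jh\|+\|[A,X_j]\|\,\|h\|$, so $A$ is bounded in the graph norm, $e^{itA}$ leaves $\mathrm{dom}(X_j)$ invariant, and $Y_j^{(t)}-X_j=\int_0^t U_s\,i[A,X_j]\,U_s^*\,ds$ holds on all of $\mathrm{dom}(X_j)$; a routine adjoint computation shows $T^*$ inherits the hypothesis, which your reduction to self-adjoint parts needs.
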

\nocite{*}
\bibliographystyle{abbrv}
\bibliography{refs}
\vspace{2em} 

\noindent
\textsc{Rishabh Bhutani} \\
\textit{Email address:} \texttt{rishabh.bhutani@berkeley.edu} \\
Department of Mathematics, University of California, Berkeley, CA  94720-3840

\vspace{1em}

\noindent
\textsc{Dan Virgil Voiculescu} \\
\textit{Email address:} \texttt{dvv@math.berkeley.edu} \\
Department of Mathematics, University of California, Berkeley, CA 94720-3840
\end{document}